\numberwithin{equation}{section}
\theoremstyle{plain}
\newtheorem{theor}{Theorem}[section]
\newtheorem{lemma}[theor]{Lemma}
\theoremstyle{remark}
\newtheorem{rem}[theor]{Remark}
\newcommand{\EE}{\mathbb{E\thinspace}}
\newcommand{\PP}{\mathbb{P\thinspace}}
\DeclareMathOperator{\tr}{trace}
\DeclareMathOperator{\spa}{span}
\DeclareMathOperator{\Sym}{Sym}
\DeclareMathOperator{\codim}{codim}
\DeclareMathOperator{\rk}{rank}
\DeclareMathOperator{\dist}{dist}
\begin{document}

\title{When a system of real quadratic equations has a solution}

\author[Alexander Barvinok]{Alexander Barvinok}
\address{Department of Mathematics, University of Michigan, Ann Arbor, MI 48109-1043, USA }
\email{ \{barvinok, rudelson\}@umich.edu}
\author[Mark Rudelson]{Mark Rudelson}
\date{October 3, 2021} 
\thanks{  Research of AB and MR is partially supported by NSF Grants  DMS 1855428 and DMS  2054408 respectively.} 

\keywords{positive semidefinite relaxation, quadratic equations, algorithms}
\subjclass[2020]{Primary: 14P05. Secondary:  14Q30, 90C22}
   
 \begin{abstract}
We provide a sufficient condition for solvability of a system of real quadratic equations  $p_i(x)=y_i$, $i=1, \ldots, m$, 
where $p_i: {\mathbb R}^n \longrightarrow {\mathbb R}$ are quadratic forms.
By solving a positive semidefinite program,  one can reduce it to another system of the type $q_i(x)=\alpha_i$, $i=1, \ldots, m$, 
where $q_i: {\mathbb R}^n \longrightarrow {\mathbb R}$ are quadratic forms and $\alpha_i=\tr q_i$. 
We prove that the latter system has solution
$x \in {\mathbb R}^n$ if for some (equivalently, for any) orthonormal basis $A_1,\ldots, A_m$ in the space spanned by the matrices of the forms $q_i$, the operator norm 
of $A_1^2 + \ldots + A_m^2$ does not exceed $\eta/m$ for some absolute constant $\eta > 0$. 
 The condition can be checked in polynomial time and is satisfied, for example, for random $q_i$ provided $m \leq \gamma \sqrt{n}$ for an absolute constant $\gamma >0$. We prove a similar sufficient condition for a system of homogeneous quadratic equations to have a non-trivial solution.
 While the condition we obtain is of an algebraic nature, the proof relies on analytic tools including Fourier analysis and measure concentration.
 \end{abstract}

 \maketitle

\section{Introduction and main results}\label{sec1}

\subsection{Systems of real quadratic equations}\label{subsec1.1} Let $q_1, \ldots, q_m: {\mathbb R}^n \longrightarrow {\mathbb R}$ be quadratic forms, 
$$q_i(x)= \langle Q_i x, x \rangle \quad \text{for} \quad i=1, \ldots, m,$$
where $Q_i$ are $n \times n$ symmetric matrices and 
$$\langle x, y \rangle = \sum_{i=1}^n \xi_i \eta_i \quad \text{for} \quad x=\left(\xi_1, \ldots, \xi_n\right) \quad \text{and} \quad y=\left(\eta_1, \ldots, \eta_n\right)$$
is the standard scalar product in ${\mathbb R}^n$. 

Let $\alpha_1, \ldots, \alpha_m$ be real numbers. We want to find out when the system of equations 
\begin{equation}\label{eq1.1.1}
q_i(x) = \alpha_i \quad \text{for} \quad i=1, \ldots, m 
\end{equation}
has a solution $x \in {\mathbb R}^n$. Such systems of equations appear in various contexts, see, for example, \cite{Bi16}, \cite{L+14}, \cite{Pa13}. If the number $m$ of equations is fixed in advance, one can decide in polynomial time whether the system has a solution \cite{Ba93}, \cite{GP05}, \cite{Ba08}. The same is true if the number $n$ of variables is fixed in advance, in which case a polynomial time algorithm to test feasibility exists even if $q_i$ are polynomials of an arbitrary degree, see, for example, \cite{B+06}. 

If $m$ and $n$ are both allowed to grow, the problem becomes computationally hard. Unless the computational complexity hierarchy collapses, there is no polynomial time algorithm to test the feasibility of \eqref{eq1.1.1}. Furthermore, it is not known whether the feasibility problem belongs to the complexity class {\bf NP}. In other words, it is not known
whether one can present a polynomial size certificate for the system \eqref{eq1.1.1} to have a solution when it is indeed feasible (note that using repeated squaring of the type $x_{n+1}=x_n^2$, one can construct examples of feasible systems for which no solution has a polynomial size description).

In fact, testing the feasibility of an arbitrary system of real polynomial equations can be easily reduced to testing the feasibility of a system (\ref{eq1.1.1}). First, we gradually reduce the degree of polynomials by introducing new variables and equations of the type $\xi_{ij}-\xi_i \xi_j=0$, and hence reduce a given polynomial system to a system 
$$q_i(x)=0 \quad \text{for} \quad i=1, \ldots, m,$$
where $q_i$ are quadratic, not necessarily homogeneous, polynomials. Then we introduce  another variable $\tau$ and replace the above system by a system of homogeneous 
quadratic equations 
$$\tau^2 q_i\left(\tau^{-1} x \right)=0 \quad \text{for} \quad i=1, \ldots, m$$ with one more quadratic constraint $\tau^2=1$.

We are also interested in systems of homogeneous equations
\begin{equation}\label{eq1.1.2}
q_i(x)=0 \quad \text{for} \quad i=1, \ldots, m, 
\end{equation}
in which case we want to find out whether the system has a non-trivial solution $x \ne 0$.  The problem is also computationally hard. We briefly sketch how an efficient algorithm for testing the existence of a non-trivial solution in \eqref{eq1.1.2} would produce an efficient algorithm for testing the feasibility of \eqref{eq1.1.1}. Given a system \eqref{eq1.1.1}, by introducing a new variable $\tau$, as above we replace \eqref{eq1.1.1} by a system of homogeneous quadratic equations, where we want to enforce $\tau \ne 0$. This is done by introducing yet another variable $\sigma$ and the equation 
$$R^2 \tau^2 - \left(\xi_1^2 + \ldots + \xi_n^2\right)=\sigma^2$$
binding all variables together, so that if $\tau =0$ then all other variables are necessarily $0$. Here $R$ is meant to be a very large constant and in fact, it can be treated as infinitely large, with computations in the ordered field of rational functions in $R$, the trick first introduced in \cite{GV88}.

In this paper, we present a computationally simple sufficient criteria for \eqref{eq1.1.1}, respectively \eqref{eq1.1.2}, to have a solution, respectively a non-trivial solution. We 
start with by now a standard procedure of semidefinite relaxation.

\subsection{Positive semidefinite relaxation}\label{subsec1.2}
For an $n \times n$ real symmetric matrix $X$, we write $X \succeq 0$ to say that $X$ is positive semidefinite. 

Given (\ref{eq1.1.1}), we consider the following system of linear equations 
\begin{equation}\label{eq1.2.1}
\tr (Q_i X) = \alpha_i \quad \text{for} \quad i=1, \ldots, m \qquad \text{where} \qquad
X \succeq 0 
\end{equation}
in $n \times n$ positive semidefinite matrices $X$. Unlike (\ref{eq1.1.1}), the system (\ref{eq1.2.1}) is convex and efficient algorithms are available to test its feasibility, see \cite{Pa13} for a survey.
Clearly, if $x=\left(\xi_1, \ldots, \xi_n\right)$ is a solution to (\ref{eq1.1.1}) then the matrix $X=\left(x_{ij}\right)$ defined by 
$x_{ij}=\xi_i \xi_j$ is a positive semidefinite solution to (\ref{eq1.2.1}). If $m \leq 2$, then the converse is true: if the system (\ref{eq1.2.1}) has a solution then so does (\ref{eq1.1.1}), see, for example,
Section II.13 of \cite{Ba02a}. For $m \geq 3$ the system (\ref{eq1.2.1}) may have solutions while (\ref{eq1.1.1}) may be infeasible. For example, the system of quadratic equations 
$$\xi_1^2 =1, \quad \xi_2^2=1 \quad \text{and} \quad \xi_1 \xi_2 = 0$$ 
does not have a solution, whereas the $2 \times 2$ identity matrix $I$ is the solution to its positive semidefinite relaxation. One corollary of our results is that such examples are, in some sense, ``atypical".

Our goal is to find a computationally simple criterion when a solution to (\ref{eq1.2.1}) implies the existence of a solution to (\ref{eq1.1.1}).

Let $X$ be a solution to (\ref{eq1.2.1}). Since $X \succeq 0$, we can write $X=T T^{\ast}$ for an $n \times n$ matrix $T$. Then 
$$ \tr(Q_i X)= \tr (Q_i T T^{\ast}) = \tr (T^{\ast} Q_i T).$$
Let us define matrices
\begin{equation}\label{eq1.2.2}
\widehat{Q_i} = T^{\ast} Q_i T  \quad \text{for} \quad i=1, \ldots, m
\end{equation}
and the corresponding quadratic forms $\widehat{q_i}: {\mathbb R}^n \longrightarrow {\mathbb R}$, 
\begin{equation}\label{eq1.2.3}
\widehat{q_i}(x)=\langle  \widehat{Q_i} x, x \rangle = q_i(Tx) \quad \text{for} \quad i=1, \ldots, m. 
\end{equation}
If $x \in {\mathbb R}^n$ is a solution to the system 
\begin{equation}\label{eq1.2.4}
\widehat{q_i}(x)= \alpha_i \quad \text{for} \quad i=1, \ldots, m
\end{equation}
then $y=Tx$ is a solution to (\ref{eq1.1.1}). We note that 
\begin{equation}\label{eq1.2.5}
\alpha_i= \tr \widehat{Q_i} \quad \text{for} \quad i=1, \ldots, m. 
\end{equation}
 It may happen that the system (\ref{eq1.1.1}) has a solution while (\ref{eq1.2.4}) does not, but if 
$X$ and hence $T$ are invertible, the systems (\ref{eq1.1.1}) and (\ref{eq1.2.4}) are equivalent. Furthermore, if there are no invertible $X \succeq 0$ satisfying (\ref{eq1.2.1}), then the affine subspace defined by the equations $\tr (Q_i X)=\alpha_i$ intersects the cone of positive semidefinite matrices at a proper face, and the system (\ref{eq1.1.1}) can be effectively reduced to a system of quadratic equations in fewer variables, cf., for example, Section II.12 of \cite{Ba02a}. Summarizing, a solution $X$ to (\ref{eq1.2.1}) allows us to replace (\ref{eq1.1.1}) by a similar system, where the right hand sides $\alpha_i$ are the traces of the quadratic forms in the left hand side.

Ultimately, we are interested in finding out when the system (\ref{eq1.2.4}) of quadratic equations with additional conditions (\ref{eq1.2.5}) has a solution $x \in {\mathbb R}^n$.

\subsection{Reduction to an orthonormal basis and the main result}\label{subsec1.3}
Before we state our main result, some remarks are in order. As agreed, we consider the system (\ref{eq1.1.1}) where $\alpha_i=\tr q_i$. Without loss of generality, we assume that 
the quadratic forms $q_i$ and hence their matrices $Q_i$ are linearly independent. For an invertible $m \times m$ matrix $M=\left(\mu_{ij}\right)$, let us define new forms 
$$\tilde{q}_i=\sum_{j=1}^m \mu_{ij} q_j \quad \text{for} \quad i=1, \ldots, m$$
and new right hand sides 
$$\tilde{\alpha_i} = \sum_{j=1}^m \mu_{ij} \alpha_j \quad \text{for} \quad i=1, \ldots, m.$$
Then the system (\ref{eq1.1.1}) has a solution if and only if the system 
$$\tilde{q}_i(x)=\tilde{\alpha}_i \quad \text{for} \quad i=1, \ldots, m$$
has a solution. Hence, ideally, a criterion for the system (\ref{eq1.1.1}) to have a solution should depend not on the forms $q_1, \ldots,  q_m$ per se (or their matrices $Q_1, \ldots, Q_m$) but on the subspace $\spa \left( q_1, \ldots, q_m\right)$ in the space of quadratic forms (equivalently, on the subspace $\spa \left(Q_1, \dots, Q_m \right)$ in the space of $n \times n$ real symmetric matrices).

We consider the standard inner product in space of $n \times n$ real matrices:
$$\langle X, Y \rangle = \tr X^{\ast} Y.$$
In particular, for symmetric matrices $X=\left(\xi_{ij}\right)$ and $Y=\left(\eta_{ij}\right)$ we have 
$$\langle X, Y \rangle = \tr XY =\sum_{1 \leq i, j \leq n} \xi_{ij} \eta_{ij}$$
and the space of $n \times n$ symmetric matrices becomes a Euclidean space. 

We will be using the following observation. Let ${\mathcal L}$ be a subspace in the space 
of $n \times n$ symmetric matrices and let $A_1, \ldots, A_m$ be an orthonormal basis of ${\mathcal L}$, so that 
$$\langle A_i, A_j \rangle = \tr A_i A_j= \begin{cases} 1 &\text{if\ } i =j , \\ 0 &\text{if\ } i \ne j. \end{cases}$$
Then the matrix $A_1^2 + \ldots + A_m^2$ does not depend on a choice of an orthonormal basis and hence is an invariant of the subspace ${\mathcal L}$. Indeed, if $B_1, \ldots, B_m$ is another orthonormal basis of ${\mathcal L}$, then 
$$B_i=\sum_{j=1}^m \mu_{ij} A_j \quad \text{for} \quad i=1, \ldots, m$$
and some orthogonal matrix $M=\left(\mu_{ij}\right)$ and hence 
$$\sum_{i=1}^m B_i^2 = \sum_{i=1}^m \left( \sum_{1 \leq j_1, j_2 \leq m} \mu_{ij_1} \mu_{i j_2} A_{j_1} A_{j_2}\right)= \sum_{1 \leq j_1, j_2 \leq m} 
\left( \sum_{i=1}^m \mu_{ij_1} \mu_{ij_2} \right) A_{j_1} A_{j_2}= \sum_{j=1}^m A_j^2.$$

For an $n \times n$ real symmetric matrix $Q$, we denote by $\| Q\|_{\mathrm{op}}$ the operator norm of $Q$, that is, the largest absolute value of an eigenvalue of $Q$.

We prove the following main result.
\begin{theor}\label{th1.3} There is an absolute constant $\eta >0$ 
such that the following holds. Let $Q_1, \ldots, Q_m$, $m \geq 3$,  be linearly independent $n \times n$ symmetric matrices and let 
$q_i: {\mathbb R}^n \longrightarrow {\mathbb R}$ for $i=1, \ldots, m$ be the corresponding quadratic forms,
$$q_i(x)=\langle Q_i x, x \rangle \quad \text{for} \quad i=1, \ldots, m.$$
Suppose that 
$$\left\| \sum_{i=1}^m A_i^2 \right\|_{\mathrm{op}} \ \leq \ \frac{\eta}{m}$$
for some (equivalently, for any) orthonormal basis $A_1, \ldots, A_m$ of the subspace 
\newline $\spa\left(Q_1, \ldots, Q_m\right)$.
Then the system of quadratic equations 
$$q_i(x)=\tr Q_i \quad \text{for} \quad i=1, \ldots, m$$
has a solution $x \in {\mathbb R}^n$.
\end{theor}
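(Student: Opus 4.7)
Since feasibility of the system depends only on the subspace $\spa(Q_1,\ldots,Q_m)$, I would begin by replacing the $Q_i$ with an orthonormal basis $A_i$ (in the Frobenius inner product $\langle X,Y\rangle=\tr XY$), so that the system reduces to $\langle A_ix,x\rangle=\tr A_i$ with $\tr(A_iA_j)=\delta_{ij}$. Introducing the centered quadratic map $Z\colon\mathbb{R}^n\to\mathbb{R}^m$, $Z_i(x)=\langle A_ix,x\rangle-\tr A_i$, the key observation is that under the standard Gaussian measure $\gamma_n$ one has $\EE\,Z_i=0$, so the pushforward $\nu:=Z_{\ast}\gamma_n$ is centered at $0\in\mathbb{R}^m$. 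The strategy is to prove that $\nu$ has a continuous density $\rho$ on $\mathbb{R}^m$ satisfying $\rho(0)>0$, and then to convert this analytic fact into $0\in Z(\mathbb{R}^n)$.

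The characteristic function $\widehat{\nu}(t)=\EE\exp(i\langle t,Z(x)\rangle)$ is an explicit complex Gaussian integral: setting $Q(t):=\sum_j t_jA_j$, one obtains $\widehat{\nu}(t)=e^{-i\tr Q(t)}\bigl[\det(I-2iQ(t))\bigr]^{-1/2}$. Expanding $\tr\log(I-2iQ(t))=-2i\tr Q+2\tr Q^2+\tfrac{8i}{3}\tr Q^3-\cdots$ and using $\tr(A_jA_k)=\delta_{jk}$, the linear term cancels against $e^{-i\tr Q(t)}$ and the quadratic term contributes $-\tr Q(t)^2=-\|t\|^2$, yielding
\[
\widehat{\nu}(t)=\exp\bigl(-\|t\|^2+R(t)\bigr),\qquad R(t)=-\tfrac{4i}{3}\tr Q(t)^3+2\tr Q(t)^4+\cdots.
\]
The hypothesis enters through the Cauchy--Schwarz bound
\[
\|Q(t)\|_{\mathrm{op}}^2=\sup_{\|v\|=1}\Bigl\|\sum_{j=1}^m t_j A_j v\Bigr\|^2\leq\|t\|^2\Bigl\|\sum_{j=1}^m A_j^2\Bigr\|_{\mathrm{op}}\leq\tfrac{\eta}{m}\|t\|^2,
\]
combined with the general estimate $|\tr Q(t)^k|\leq\|Q(t)\|_{\mathrm{op}}^{k-2}\|t\|^2$ for $k\geq 2$, which together make $R(t)$ small compared to $\|t\|^2$ throughout the bulk region $\|t\|\lesssim\sqrt{m/\eta}$.

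The core technical step --- and the main obstacle --- is proving simultaneously that $\widehat{\nu}\in L^{1}(\mathbb{R}^m)$ (giving the density $\rho(y)=(2\pi)^{-m}\int e^{-i\langle t,y\rangle}\widehat{\nu}(t)\,dt$ by Fourier inversion) and that $\rho(0)=(2\pi)^{-m}\int\widehat{\nu}(t)\,dt>0$. Integrability in the bulk region follows from the Gaussian-type bound $|\widehat{\nu}(t)|\leq e^{-\|t\|^2/2}$; outside it one exploits the explicit identity $|\widehat{\nu}(t)|=\prod_j(1+4\lambda_j(Q(t))^2)^{-1/4}$ together with the orthonormality constraint $\sum_j\lambda_j(Q(t))^2=\|t\|^2$. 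Positivity of the integral is the delicate part: the imaginary part of $R(t)$, dominated by $-\tfrac{4}{3}\tr Q(t)^3$, produces an oscillatory phase that must not cancel the manifestly positive contribution $\int e^{-\|t\|^2}dt$. This is precisely where the factor $\eta/m$ enters quantitatively, as it keeps the cubic phase bounded by $O(\sqrt{\eta/m})$ on the natural Gaussian scale, so that for $\eta$ sufficiently small the integrand stays in a half-plane avoiding cancellation. Once $\rho(0)>0$ is established, $\rho$ is continuous and strictly positive on an open neighborhood $U$ of $0$; since $\nu$ is concentrated on $Z(\mathbb{R}^n)$, the image meets every neighborhood of $0$, and the conclusion $0\in Z(\mathbb{R}^n)$ follows by a compactness argument: any sequence $x_k$ with $Z(x_k)\to0$ either has a convergent subsequence (whose limit solves the system by continuity of $Z$) or escapes to infinity along a direction $\hat x$ satisfying $\langle A_i\hat x,\hat x\rangle=0$ for all $i$, and the latter alternative can be ruled out or resolved using the centered structure of $Z$ together with the fact that $dZ(x)\colon y\mapsto 2(\langle A_i x, y\rangle)_i$ has full rank $m$ outside a proper algebraic subvariety (linear independence of the $A_i$).
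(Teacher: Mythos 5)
Your overall strategy is in fact the same as the paper's: after normalizing to an orthonormal basis $A_1,\ldots,A_m$, your characteristic function $\widehat{\nu}(t)$ is (up to the $1/2$ scaling convention) exactly the integrand in Theorem~\ref{th2.1}, and the goal of showing $\int\widehat{\nu}\neq 0$ is the paper's \eqref{eq3.1}. However, there is a serious gap in the central quantitative step. You propose to control the oscillatory cubic phase by the deterministic Cauchy--Schwarz bound $\|Q(t)\|_{\mathrm{op}}\leq\sqrt{\eta/m}\,\|t\|$, giving $|\tr Q(t)^3|\leq\|Q(t)\|_{\mathrm{op}}\|t\|^2\leq\sqrt{\eta/m}\,\|t\|^3$, and you claim this keeps the phase small ``on the natural Gaussian scale.'' But the weight $e^{-\|t\|^2}$ on $\mathbb{R}^m$ concentrates its mass where $\|t\|\asymp\sqrt{m}$, and at that scale your bound gives $|\tr Q(t)^3|\lesssim\sqrt{\eta}\,m$, which is unbounded as $m\to\infty$ for any fixed absolute constant $\eta$. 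The integrand then oscillates wildly exactly where the Gaussian mass lives, and the ``integrand stays in a half-plane'' argument collapses. This is precisely why the paper does \emph{not} use a worst-case bound here: the essential ingredient is the \emph{second-moment} bound $\EE_w\bigl(\sum_j\lambda_j^3(w)\bigr)^2\lesssim\eta/m^3$ (Lemma~\ref{le4.2}), which is a factor of $m$ smaller than the square of your pointwise bound because positive and negative eigenvalues of $A(w)$ cancel on average. Markov's inequality then shows the cubic phase is small for most directions (the ``tame'' set), and the remaining ``wild'' directions are controlled by a separate concentration inequality for $\|A(w)\|_{\mathrm{S_4}}$ (Lemma~\ref{le4.5}) and a dyadic layer decomposition. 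Without this moment/concentration machinery your proof does not close.

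There is also a secondary gap in the concluding step. Even granting $\rho(0)>0$, your passage from $0\in\overline{Z(\mathbb{R}^n)}$ to $0\in Z(\mathbb{R}^n)$ is not justified: images of quadratic maps need not be closed, and the escape-to-infinity alternative cannot be dismissed merely because $dZ$ has generic full rank --- a common isotropic direction $\hat x$ with $\langle A_i\hat x,\hat x\rangle=0$ for all $i$ may perfectly well exist. The paper sidesteps this via the $\sigma\to\infty$ deformation in the proof of Theorem~\ref{th2.1}: the left side of \eqref{eq2.3.3} carries a Gaussian weight $e^{-\|x\|^2/2}$ that kills the large-$\|x\|$ contribution, while on a fixed ball the infimum of $\sum_i(q_i-\alpha_i)^2$ is strictly positive if there is no solution; together these force the integral to $0$, contradicting the nonzero Fourier limit. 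Some substitute for this argument is needed at the end of your approach.
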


We prove a similar result for systems of homogeneous quadratic equations, where we are interested in finding a non-trivial solution.

\begin{theor}\label{th1.5} There is an absolute constant $\eta >0$  such that the following holds. 
Let $Q_1, \ldots, Q_m$, $m \geq 3$, be $n \times n$ real symmetric matrices such that 
$$\tr Q_i=0 \quad \text{for} \quad i=1, \ldots, m,$$
and 
 let $q_i: {\mathbb R}^n \longrightarrow {\mathbb R}$, 
$$q_i(x)=\langle Q_i x , x \rangle \quad \text{for} \quad i=1, \ldots, m,$$
be the corresponding quadratic forms.
Suppose that 
$$\left\| \sum_{i=1}^m A_i^2 \right\|_{\mathrm{op}} \ \leq \ \frac{\eta}{m}$$
for some (equivalently, for any) orthonormal basis $A_1, \ldots, A_m$ of the subspace \newline $\spa\left(Q_1, \ldots, Q_m\right)$.
Then the system (\ref{eq1.1.2}) of equations 
has a solution $x \ne 0$. 
\end{theor}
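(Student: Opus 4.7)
My plan is to reduce Theorem~\ref{th1.5} to Theorem~\ref{th1.3} by adjoining a single sphere-type equation that forbids the trivial solution. Since $\tr Q_i = 0$, the system $q_i(x) = \tr Q_i$ coincides with the homogeneous system \eqref{eq1.1.2}, so applying Theorem~\ref{th1.3} directly would only deliver a solution that might be zero. To rule this out I introduce the extra matrix $Q_{m+1} = I$, whose associated quadratic form is $\langle x, x\rangle$ and whose trace is $n$, so that the enlarged system $q_i(x) = \tr Q_i$ for $i = 1, \ldots, m+1$ automatically enforces $|x|^2 = n > 0$.

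The first step is to verify the hypothesis of Theorem~\ref{th1.3} for this enlarged family. Since every element of $\spa(Q_1, \ldots, Q_m)$ has trace zero while $\tr I = n$, the matrix $I$ is linearly independent of $Q_1, \ldots, Q_m$; moreover $I/\sqrt{n}$ has unit Frobenius norm and is orthogonal to each $A_i$ (again because $\tr A_i = 0$). Thus $A_1, \ldots, A_m, I/\sqrt{n}$ is an orthonormal basis of $\spa(Q_1, \ldots, Q_m, I)$, and by the basis-invariance established in Subsection~\ref{subsec1.3} the relevant invariant is
\[
S \;=\; \sum_{i=1}^m A_i^2 + \frac{I}{n}, \qquad \|S\|_{\mathrm{op}} \;\le\; \frac{\eta}{m} + \frac{1}{n}.
\]

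The remaining point, which is where the reduction actually bites, is that the hypothesis of Theorem~\ref{th1.5} already forces $n$ from below. Since $\sum_{i=1}^m A_i^2 \succeq 0$ has trace $\sum_{i=1}^m \tr A_i^2 = m$, its operator norm is at least $m/n$, and the hypothesis $\|\sum_{i=1}^m A_i^2\|_{\mathrm{op}} \le \eta/m$ therefore yields $1/n \le \eta/m^2$. Substituting gives $\|S\|_{\mathrm{op}} \le \eta(m+1)/m^2$. Writing $\eta_0$ for the absolute constant of Theorem~\ref{th1.3}, the required bound $\|S\|_{\mathrm{op}} \le \eta_0/(m+1)$ becomes $\eta \le \eta_0\bigl(m/(m+1)\bigr)^2$, which for $m \ge 3$ is implied by the single choice $\eta \le (9/16)\,\eta_0$. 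With this choice Theorem~\ref{th1.3} supplies $x \in \mathbb{R}^n$ satisfying $q_i(x) = \tr Q_i = 0$ for $i = 1, \ldots, m$ and $\langle x, x\rangle = \tr I = n$, so in particular $x \ne 0$. I do not anticipate any genuine obstacle beyond this constant bookkeeping, which is made possible by the trace lower bound on $n$ above; the argument uses Theorem~\ref{th1.3} as a black box.
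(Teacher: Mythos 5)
Your proof is correct and takes a genuinely different route from the paper's. The paper proves Theorem~\ref{th1.5} by rerunning the sphere-integral analysis through its Fourier criterion for homogeneous systems (Theorem~\ref{th2.4}, which needs $m<n$) and notes that the proof is ``identical'' to that of Theorem~\ref{th1.3}; you instead reduce Theorem~\ref{th1.5} to Theorem~\ref{th1.3} as a black box by adjoining $Q_{m+1}=I$ so that the enlarged system forces $\|x\|^2 = n > 0$. The reduction works because the tracelessness of the $Q_i$ makes $I/\sqrt{n}$ a unit vector orthogonal to $\spa(Q_1,\ldots,Q_m)$, so the invariant of the enlarged $(m+1)$-dimensional span is exactly $\sum_{i=1}^m A_i^2 + I/n$, and the crucial observation (which the paper never states) is that $\tr\sum_{i=1}^m A_i^2 = m$ forces $\bigl\|\sum_{i=1}^m A_i^2\bigr\|_{\mathrm{op}} \ge m/n$, so the hypothesis already implies $n \ge m^2/\eta$; this turns the extra $1/n$ into $\eta/m^2$ and the elementary bound $m^2/(m+1)^2 \ge 9/16$ for $m\ge 3$ lets you take $\eta = (9/16)\eta_0$ with $\eta_0$ the constant of Theorem~\ref{th1.3}. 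Your approach is more economical (no second pass through the integral estimates, no separate Theorem~\ref{th2.4}) and makes the logical dependence of Theorem~\ref{th1.5} on Theorem~\ref{th1.3} explicit; the paper's approach keeps the two theorems on an equal footing and exposes the underlying Fourier mechanism in the homogeneous case. I checked the orthonormality of $A_1,\ldots,A_m,I/\sqrt{n}$, the linear independence of $Q_1,\ldots,Q_m,I$ needed by Theorem~\ref{th1.3}, the operator-norm bound for $S$, the trace argument for $n$, and the arithmetic for $m\ge 3$: all correct.
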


\begin{rem} \label{rem: eta}
Our proofs of Theorems \ref{th1.3} and \ref{th1.5} work for $\eta=10^{-6}$, however, we made no effort to optimize this constant.
 \end{rem}
 
Note that the operator norm of the matrix $\sum_{i=1}^m A_i^2$ is its largest eigenvalue. Thus, the  criterion appearing in Theorems \ref{th1.3} and \ref{th1.5} is algebraic like the problem itself. Despite that, the proofs of these theorems rely on analytic tools:  introduction of the Gaussian measure, the Fourier transform asymptotic, and the measure concentration. We discuss this in more detail in Section \ref{sec3}.

\subsection{Discussion}\label{subsec1.4} 

\subsubsection{Computational complexity}\label{subsubsec1.4.1} Given matrices $Q_1, \ldots, Q_m$, one can compute an orthonormal basis $A_1, \ldots, A_m$ of 
$\spa\left(Q_1, \ldots, Q_m\right)$, using, for example, the Gram-Schmidt orthogonalization process. Then one can check the inequality for the operator norm 
of $A_1^2 + \ldots + A_m^2$. These are standard linear algebra problems that can be solved in polynomial time. However, we don't know how to find a solution $x$ in polynomial time or whether a solution $x$ with a {\it polynomial size description} even exists when the conditions of Theorems \ref{th1.3} and \ref{th1.5} are satisfied.

\subsubsection{The case of random matrices}\label{subsubsec1.4.2} 
 Let $Q_1, \ldots, Q_m$ be independent symmetric random matrices with entries above the diagonal being independent normal random variables of expectation $0$ and variance $1$ and the diagonal entries being normal of  expectation $0$ and variance $2$. In other words, up to the scaling factor of $\sqrt{n}$, the matrices $Q_1, \ldots, Q_m$ are sampled independently from the Gaussian Orthogonal Ensemble (GOE).

We assume that $m \leq n$.
 As $n$ grows, with high probability 
we have (we ignore low-order terms)
$$\|Q_i\|_{\mathrm{op}}  \approx 2\sqrt{n} \quad \text{and} \quad  \langle Q_i, Q_i \rangle \approx n^2 \quad \text{for} \quad i=1, \ldots, m,$$
see, for example, Section 2.3 of \cite{Ta12}.

 Let $A_1, \ldots, A_m$ be the orthonormal basis of $\spa\left(Q_1, \ldots, Q_m\right)$ obtained by the Gram - Schmidt orthogonalization from 
$Q_1, \ldots, Q_m$. Then, up to a normalizing factor, each $A_i$ is also sampled from GOE, so we have 
$$\|A_i\|_{\mathrm{op}} \approx \frac{2}{\sqrt{n}} \quad \text{for} \quad i=1, \ldots, m,$$
with high probability. Hence
$$\left\| \sum_{i=1}^m A_i^2 \right\|_{\mathrm{op}} \ \leq \ \sum_{i=1}^m \|A_i\|^2_{\mathrm{op}} \ \approx \ \frac{4m}{n}.$$
Hence if $m \leq \sqrt{\eta n}/2$, with high probability the conditions of Theorems \ref{th1.3} and \ref{th1.5} are satisfied. Similar behavior can be observed for other models of random symmetric matrices with independent entries sampled from a distribution with expectation 0, variance 1 and sub-Gaussian tail. Informally, for the conditions of Theorems \ref{th1.3} and \ref{th1.5} to hold, we want $n$ to be substantially larger than $m$ and the subspace $\spa\left(Q_1, \ldots, Q_m\right)$ to be sufficiently generic.

\subsubsection{The metric geometry of the cone of positive semidefinite matrices}\label{subsubsec1.4.3} As before, we consider the space $\Sym_n$ of $n \times n$ symmetric matrices as a Euclidean space. Let ${\mathcal S}_+ \subset \Sym_n$ be the convex cone of positive semidefinite matrices. From Section \ref{subsubsec1.4.2},
we deduce the following metric property of ${\mathcal S}_+$: There is an absolute constant $\gamma > 0$ such that if ${\mathcal A} \subset \Sym_n$ is a random affine subspace with $\codim A \leq \gamma \sqrt{n}$ containing the identity matrix $I_n$, then ${\mathcal A}$ contains a positive semidefinite matrix of rank 1 with probability approaching $1$ as $n$ grows. 

We don't know if the estimates of Theorem \ref{th1.3} and Sections \ref{subsubsec1.4.2} and \ref{subsubsec1.4.3} are optimal, or, for example, whether we can make $m$ in
Section \ref{subsubsec1.4.2} and $\codim {\mathcal A}$ in Section \ref{subsubsec1.4.3} proportional to $n$ instead of $\sqrt{n}$. There is a vast literature on the {\it average} characteristics of the set of solutions for systems of real polynomial equations, see, for example, \cite{Bu07} and reference therein, but much less appears to be known regarding solvability of such systems with high probability. 

\subsubsection{Solving positive semidefinite relaxation}\label{subsubsec1.4.4} Suppose we want to apply Theorem \ref{th1.3} to test the solvability of the original system (\ref{eq1.1.1}), where we do not necessarily have $\alpha_i = \tr q_i$. We begin by looking for a solution $X$ to the positive semidefinite program (\ref{eq1.2.1}).
If there is no solution $X$, we conclude that the system (\ref{eq1.1.1}) has no solutions. If there is a solution $X \succeq 0$ with $\rk X \leq 1$, we conclude that the system 
(\ref{eq1.1.1}) has a solution. The difficulty arises when we find a solution $X \succeq 0$ but with $\rk X > 1$. It is known that if there is a solution $X \succeq 0$, then there is a solution $X \succeq 0$ with an additional constraint 
$$\rk X \ \leq \ \left\lfloor \frac{\sqrt{8m+1} -1}{2}\right\rfloor.$$
Any extreme point of the set of solutions to (\ref{eq1.2.1}) satisfies this condition, see, for example, Section II.13 of \cite{Ba02a}. Curiously, if we are to use Theorem \ref{th1.3}
to ascertain the existence of a solution, it makes sense to try to find an $X \succeq 0$ not on the boundary, but as close as possible to the ``middle" of the set of solutions of 
(\ref{eq1.2.1}) because we want the transformed matrices $\widehat{Q}_i$ given by (\ref{eq1.2.2}) to be as generic as possible. For example, one can look for $X$ with the maximum von Neumann entropy 
$$\sum_{j=1}^n \lambda_j \ln \frac{1}{\lambda_j},$$
where $\lambda_1, \ldots, \lambda_n$ are the eigenvalues of $X$, see, for example, \cite{We78}.  Finding such an $X$ is a convex optimization problem and hence can be solved efficiently. Informally, if the number $m$ of equations is rather small compared to the number $n$ of variables, if the 
matrices $Q_1, \ldots, Q_m$ of equations in (\ref{eq1.1.1}) are sufficiently generic, and if the solutions $X$ to the positive semidefinite relaxation (\ref{eq1.2.1}) can be found deep enough the cone of ${\mathcal{S}_+}$ positive semidefinite matrices, then the system (\ref{eq1.1.1}) will have a solution.

We note that in the homogeneous case one should also
 be careful about working with the positive semidefinite relaxation. Namely, if $X \succeq 0$ is a solution to the system of equations
\begin{equation}\label{eq1.5.1}
\tr (Q_i X) = 0 \quad \text{for} \quad i=1, \ldots, m,
\end{equation}
we factor $X=TT^{\ast}$, define $\widehat{Q_i}$ by (\ref{eq1.2.2}) and define $\widehat{q_i}$ by (\ref{eq1.2.3}), then to deduce the existence of a non-trivial solution to the system (\ref{eq1.1.2}) from the existence of a non-trivial solution to the system 
$$\widehat{q_i}(x)=0 \quad \text{for} \quad i=1, \ldots, m,$$
we must require $T$ and hence $X$ to be invertible. If there are no invertible $X \succeq 0$ satisfying (\ref{eq1.5.1}), we reduce (\ref{eq1.1.2}) to a system of homogeneous quadratic equations in fewer variables, see Section \ref{subsec1.2}.

In the rest of the paper, we prove Theorems \ref{th1.3} and \ref{th1.5}. Although the statements are real algebraic, our proofs use analytic methods, in particular, the Fourier transform. 

\section{ Outline of the proof}\label{sec3}

In what follows, we denote the imaginary unit by $\sqrt{-1}$, so as to use $i$ for indices. 

Let $Q_1, \ldots, Q_m$ be $n \times n$ real symmetric matrices and let $I$ be the $n \times n$ identity matrix. For real $\tau_1, \ldots, \tau_m$, we consider 
the matrix 
$$Q(t)=I - \sqrt{-1} \sum_{i=1}^m \tau_i Q_i \quad \text{for} \quad t=\left(\tau_1, \ldots, \tau_m \right).$$
Since the eigenvalues $\lambda_1(t), \ldots, \lambda_n(t)$ of the linear combination $\sum_{i=1}^m \tau_i Q_i$ are real, we have
$$\det Q(t) = \prod_{i=1}^n \left(1 - \sqrt{-1} \lambda_i(t)\right) \ne 0 \quad \text{for all} \quad t \in {\mathbb R}^m.$$
Therefore, we can pick a branch of 
$$\det^{\quad  -\frac{1}{2}} Q(t),$$
which we select in such a way so that at $t=0$ we get $1$.

It is also more convenient to rescale and define quadratic forms by 
$$q(x)=\frac{1}{2} \langle Qx, x \rangle.$$
Our proof of Theorems \ref{th1.3} hinges on 
the analysis of the  Fourier transform of the function $F(t):=\det^{-\frac{1}{2}} Q(t), \ t \in \mathbb{R}^m$. Namely, we prove
the following result.

\begin{theor}\label{th2.1} Let $Q_1, \ldots, Q_m$ be $n \times n$ real symmetric matrices, let
$$q_i(x)=\frac{1}{2} \langle Q_i x, x \rangle \quad \text{for} \quad i=1, \ldots, m,$$
be the corresponding quadratic forms and let  $\alpha_1, \ldots, \alpha_m$ be real numbers. Suppose that 
\begin{equation}\label{eq2.1.1}
\int_{{\mathbb R}^m} \left| \det^{\quad - \frac{1}{2}} \left(I -\sqrt{-1} \sum_{i=1}^m \tau_i Q_i \right)\right| \ dt \ < \ +\infty 
\end{equation}
and that 
\begin{equation}\label{eq2.1.2}
\int_{{\mathbb R}^m} \det^{\quad -\frac{1}{2}} \left(I -\sqrt{-1} \sum_{i=1}^m \tau_i Q_i \right) \exp\left\{ -\sqrt{-1} \sum_{i=1}^m \alpha_i \tau_i \right\} \ dt \ \ne  0.
\end{equation}
Then the system (\ref{eq1.1.1}) of equations has a solution $x \in {\mathbb R}^n$.
\end{theor}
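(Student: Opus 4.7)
The plan is to recognize $F(t) = \det^{-\frac{1}{2}}\!\left(I - \sqrt{-1}\sum_{i=1}^m \tau_i Q_i\right)$ as, up to normalization, the characteristic function of the pushforward of the standard Gaussian measure on $\mathbb{R}^n$ under the quadratic map $q : \mathbb{R}^n \to \mathbb{R}^m$, $x \mapsto (q_1(x),\ldots,q_m(x))$. Write $\gamma_n$ for the standard Gaussian measure (density $(2\pi)^{-n/2}e^{-\|x\|^2/2}$) and set $\mu := q_{\ast}\gamma_n$. The complex Gaussian integral identity, applicable because $\mathrm{Re}\left(I - \sqrt{-1}\sum_i \tau_i Q_i\right) = I$ is positive definite, yields
\[
F(t) \;=\; (2\pi)^{-n/2}\!\int_{\mathbb{R}^n} e^{-\|x\|^2/2}\, e^{\sqrt{-1}\sum_i \tau_i q_i(x)}\, dx \;=\; \int_{\mathbb{R}^m} e^{\sqrt{-1}\langle t,y\rangle}\, d\mu(y) \;=\; \widehat{\mu}(t).
\]

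Hypothesis (\ref{eq2.1.1}) then says $\widehat{\mu} \in L^1(\mathbb{R}^m)$, so Fourier inversion produces a continuous bounded density $f$ for $\mu$ with $f(y) = (2\pi)^{-m}\int F(t)e^{-\sqrt{-1}\langle t,y\rangle}\,dt$. Hypothesis (\ref{eq2.1.2}) translates into $f(\alpha) \ne 0$, and because $\mu$ is a positive measure with continuous density, $f(\alpha) > 0$ and $f > 0$ on some open neighborhood $U$ of $\alpha$. Consequently $\gamma_n(q^{-1}(U)) = \mu(U) > 0$, so $q^{-1}(U) \ne \emptyset$, and we extract a sequence $x_k \in \mathbb{R}^n$ with $q(x_k) \to \alpha$.

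The main obstacle is to upgrade these \emph{approximate} solutions to an \emph{exact} solution $x$ with $q(x) = \alpha$, since the image of a quadratic map need not be closed. I would address this by arguing that (\ref{eq2.1.1}) forces $q$ to be proper, equivalently that the only common zero of $q_1,\ldots,q_m$ is $x=0$. A clean sufficient condition is that $\spa(Q_1,\ldots,Q_m)$ contains a definite matrix $P = \sum_i c_i Q_i$: then $\sum_i c_i q_i(x) = \tfrac{1}{2}\langle Px,x\rangle \gtrsim \|x\|^2$ is coercive, boundedness of $q(x_k)$ forces boundedness of $\|x_k\|$, and any limit point of $(x_k)$ satisfies $q(x)=\alpha$ by continuity. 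The heuristic justification is that along a radial direction $t = \lambda u$ the integrand decays like $\lambda^{-\rk(M(u))/2}$ with $M(u) = \sum_i u_i Q_i$, so integrability over $\mathbb{R}^m$ demands $\rk(M(u)) > 2m$ for almost every unit vector $u$, a substantial non-degeneracy of the family $(Q_i)$.

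A backup route, should the span-contains-definite-matrix claim fail, is to invoke the coarea formula: at any regular value $\alpha'$ of $q$,
\[
f(\alpha') \;=\; (2\pi)^{-n/2}\!\int_{q^{-1}(\alpha')} \frac{e^{-\|x\|^2/2}}{J_q(x)}\, d\mathcal{H}^{n-m}(x),
\]
so $f(\alpha')>0$ directly forces $q^{-1}(\alpha')\ne\emptyset$; by Sard's theorem regular values are dense, so continuity of $f$ yields regular $\alpha_k \to \alpha$ with $f(\alpha_k)>0$ and hence $x_k \in q^{-1}(\alpha_k)$, reducing once more to properness. Either way, the Fourier-analytic identification of $f(\alpha)$ is routine; the delicate step I expect to wrestle with is showing that (\ref{eq2.1.1}) genuinely forces $q$ to be proper, so that the sequence $(x_k)$ actually has a limit.
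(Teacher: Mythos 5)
Your identification of $F(t)$ with the characteristic function $\widehat{\mu}$ of the pushforward $\mu = q_*\gamma_n$ is correct, as is the deduction that (\ref{eq2.1.1}) and (\ref{eq2.1.2}) give $\mu$ a continuous density $f$ with $f(\alpha) > 0$. But that only places $\alpha$ in the support of $\mu$, i.e., in $\overline{q({\mathbb R}^n)}$, and quadratic maps do have non-closed images (for instance $q(x,y,z)=(x^2-y^2,\,xy,\,xz)$ omits every $(a,0,c)$ with $a<0$, $c\ne 0$, although these are limits of image points). You correctly see that your argument therefore needs $q$ to be proper, but the proposed deduction of properness from (\ref{eq2.1.1}) fails. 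Properness of a homogeneous quadratic $q$ is equivalent to $q_1,\ldots,q_m$ having no common nonzero root, and the paper's Theorem \ref{th1.5} produces precisely situations where (\ref{eq2.1.1}) holds comfortably and yet the homogeneous system has a nontrivial solution. More concretely, if $\tr Q_i = 0$ for all $i$ (the hypothesis of Theorem \ref{th1.5}), then no linear combination $\sum_i c_i Q_i$ can be definite, so the ``span contains a definite matrix'' route is flatly unavailable even though (\ref{eq2.1.1}) can hold. Your heuristic that integrability forces $\rk M(u) > 2m$ generically is correct but gives a far weaker non-degeneracy than what you need; the coarea backup inherits the same gap, as you note.

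The paper never attempts to extract a limiting preimage point, and thereby sidesteps properness entirely. It argues by contradiction. By Lemma \ref{le2.2}, the Fourier integral equals, up to constants and a Gaussian mollifier in $t$, the quantity $\sigma^m \int_{{\mathbb R}^n} \exp\bigl\{-\tfrac{\sigma^2}{2}\sum_i (q_i(x)-\alpha_i)^2\bigr\}\, e^{-\|x\|^2/2}\,dx$, whose mollified Fourier side tends to the nonzero value in (\ref{eq2.1.2}) as $\sigma\to\infty$. If the system had no solution, the contribution over the compact ball $\{\|x\|\le\rho\}$ decays like $\sigma^m e^{-\sigma^2\delta/2}$, where $\delta>0$ is the minimum of $\sum_i(q_i-\alpha_i)^2$ on the ball, while the tail over $\{\|x\|>\rho\}$ is bounded \emph{uniformly in $\sigma$} by $e^{-\rho^2/4}\,2^{n/2-m}\gamma$ via a rescaling $x\mapsto x/\sqrt 2$ that re-invokes Lemma \ref{le2.2} and the integrability (\ref{eq2.1.1}) with $\alpha_i$ replaced by $\alpha_i/2$. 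Choosing $\rho$ first and then letting $\sigma\to\infty$ drives the left side to $0$, a contradiction. This calibrated two-scale estimate is exactly the device that replaces properness, and it is the step your proposal is missing.
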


We prove a similar result for homogeneous systems.
\begin{theor}\label{th2.4} Let $Q_1, \ldots, Q_m$ and $q_1, \ldots, q_m$ be as in Theorem \ref{th2.1} and assume, additionally, that $m < n$.
Suppose that 
$$ \int_{{\mathbb R}^m} \det^{\quad -\frac{1}{2}} \left(I -\sqrt{-1} \sum_{i=1}^m \tau_i Q_i \right) \ dt \ \ne  0,$$
where the integral converges absolutely. Then the system (\ref{eq1.1.2})  of equations 
has a solution $x \ne 0$.
\end{theor}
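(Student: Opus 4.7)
The plan is to reduce Theorem~\ref{th2.4} to Theorem~\ref{th2.1} by adjoining the single extra quadratic equation $\tfrac{1}{2}|x|^2 = s$ for a suitably chosen $s > 0$. Setting $Q_{m+1} = I_n$ and $A(\tau) = \sum_{i=1}^m \tau_i Q_i$, the augmented system consists of $m+1$ quadratic forms with right-hand side $(0, \ldots, 0, s)$; any solution satisfies $|x|^2 = 2s > 0$, hence is a nontrivial solution of \eqref{eq1.1.2}. The relevant integral for the augmented system is
$$J(s) = \int_{\mathbb{R}^{m+1}} \det^{\quad -\frac{1}{2}}\!\Bigl((1 - \sqrt{-1}\,\tau_0) I - \sqrt{-1}\, A(\tau)\Bigr)\, e^{-\sqrt{-1}\, s \tau_0}\, d\tau_0\, d\tau,$$
and it suffices to find $s > 0$ for which the integrand of $J(s)$ is absolutely integrable and $J(s) \neq 0$, since Theorem~\ref{th2.1} will then supply the nontrivial zero.

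The key identity
$$\det^{\quad -\frac{1}{2}}\!\bigl((1 - \sqrt{-1}\,\tau_0) I - \sqrt{-1}\, A(\tau)\bigr) = \EE\Bigl[\exp\bigl(\sqrt{-1}\,\tau_0 R + \sqrt{-1}\,\textstyle\sum_i \tau_i Y_i\bigr)\Bigr],$$
with $X \sim N(0, I_n)$, $R = \tfrac{1}{2}|X|^2$, and $Y_i = q_i(X)$, exhibits the integrand of $J(s)$ as the joint characteristic function of $(Y, R)$. Once absolute integrability over $\mathbb{R}^{m+1}$ is established, Fourier inversion gives $J(s) = (2\pi)^{m+1} f_{(Y, R)}(0, s)$. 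Since $R \geq 0$ almost surely, this joint density vanishes for $s < 0$, so $J(s) = 0$ for $s < 0$. On the other hand, the partial integral
$$g(\tau_0) := \int_{\mathbb{R}^m} \det^{\quad -\frac{1}{2}}\!\bigl((1 - \sqrt{-1}\,\tau_0) I - \sqrt{-1}\, A(\tau)\bigr)\, d\tau$$
satisfies $g(0) = \int \det^{\quad -\frac{1}{2}}(I - \sqrt{-1}\, A(\tau))\, d\tau$, which is exactly the integral hypothesized to be nonzero in Theorem~\ref{th2.4}. Interpreting $J$ as the one-dimensional Fourier transform of $g$, the identity $2\pi\, g(0) = \int_\mathbb{R} J(s)\, ds$ (valid because $g \in L^1(\mathbb{R})$ follows from the augmented absolute integrability, and $J \geq 0$ with $\int J = 2\pi g(0)$ gives $J \in L^1$), together with the vanishing of $J$ on $(-\infty, 0)$, yields $\int_0^\infty J(s)\, ds \neq 0$. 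Hence $J(s_0) \neq 0$ for some $s_0 > 0$, and Theorem~\ref{th2.1} applied to the augmented system at $\alpha = (0, \ldots, 0, s_0)$ produces $x_0 \in \mathbb{R}^n$ with $q_i(x_0) = 0$ for $i = 1, \ldots, m$ and $|x_0|^2 = 2 s_0 > 0$.

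The principal obstacle is the first step: upgrading absolute integrability of $\det^{\quad -\frac{1}{2}}(I - \sqrt{-1}\, A(\tau))$ over $\mathbb{R}^m$ to absolute integrability of $\det^{\quad -\frac{1}{2}}((1 - \sqrt{-1}\,\tau_0) I - \sqrt{-1}\, A(\tau))$ over $\mathbb{R}^{m+1}$. Writing
$$\Bigl|\det^{\quad -\frac{1}{2}}\!\bigl((1 - \sqrt{-1}\,\tau_0) I - \sqrt{-1}\, A(\tau)\bigr)\Bigr| = \prod_{j=1}^n \bigl(1 + (\tau_0 + \mu_j(\tau))^2\bigr)^{-1/4}$$
in terms of the eigenvalues $\mu_j(\tau)$ of $A(\tau)$, the inner integral in $\tau_0$ decays like $|\tau_0|^{-n/2}$ at infinity and is integrable for $n \geq 3$; one must then dominate the resulting function of $\tau$ by a constant multiple of the original integrand $\prod_j (1 + \mu_j(\tau)^2)^{-1/4}$. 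This is where the hypothesis $m < n$ is expected to play its essential role, providing the dimensional slack for a case analysis based on the relative size of $|\tau_0|$ and $\|A(\tau)\|_{\mathrm{op}}$.
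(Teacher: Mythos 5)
Your approach --- adjoining the extra equation $\tfrac12\|x\|^2 = s$, interpreting the augmented oscillatory integral $J(s)$ as a multiple of the joint density $f_{(Y,R)}(0,s)$, using $R\ge 0$ to kill $J$ on $s<0$, and then extracting a good $s_0>0$ from $\int_{\mathbb{R}} J\,ds = 2\pi g(0)\ne 0$ --- is a genuinely different route from the paper's, and in spirit it is attractive. But the step you flag yourself as the ``principal obstacle'' is a real gap that cannot be closed in general: absolute integrability of $\prod_{j}\bigl(1+(\tau_0+\mu_j(\tau))^2\bigr)^{-1/4}$ over $\mathbb{R}^{m+1}$ is \emph{strictly stronger} than the hypothesized absolute integrability of $\prod_{j}(1+\mu_j(\tau)^2)^{-1/4}$ over $\mathbb{R}^m$, and the slack $m<n$ does not bridge the difference. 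Take $m=1$ and $Q_1=\mathrm{diag}(\lambda_1,\ldots,\lambda_n)$ with distinct nonzero $\lambda_j$ of mixed sign. The hypotheses of Theorem~\ref{th2.4} hold for all $n\ge 3$: the original integrand decays like $|\tau|^{-n/2}$, and the integral is nonzero since the Gaussian mass near the indefinite cone $\{q_1=0\}$ scales like $1/\sigma$. Yet for fixed large $|\tau|$ the inner $\tau_0$-integral is of order $|\tau|^{1-n/2}$: near each pole $\tau_0=-\lambda_{j_0}\tau$ the integrand is $\asymp |\tau|^{-(n-1)/2}\bigl(1+(\tau_0+\lambda_{j_0}\tau)^2\bigr)^{-1/4}$, contributing $\asymp|\tau|^{-(n-1)/2}\cdot|\tau|^{1/2}$. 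Hence the integral over $\mathbb{R}^2$ diverges for $n=3$ and $n=4$, Theorem~\ref{th2.1} cannot be invoked on the augmented system, and none of the Fourier-inversion facts you rely on ($J\ge 0$, $J\in L^1$, $\int J=2\pi g(0)$) are available.

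The paper's proof avoids the augmentation entirely. It applies Lemma~\ref{le2.2} with $\alpha_i\equiv 0$ to the original $m$ forms, assumes for contradiction that $\sum_i q_i^2(x)\ge\delta$ for $\|x\|=1$, passes to polar coordinates and substitutes $\xi=\sigma\tau$ to show the Gaussian side equals
$$\omega_n\,\sigma^{m-n}\int_0^\infty e^{-\delta\xi^2/2}\,\xi^{n-1}\,e^{-\xi^2/2\sigma^2}\,d\xi \longrightarrow 0$$
as $\sigma\to\infty$ (here $m<n$ supplies the decaying power of $\sigma$), while the Fourier side tends to the hypothesized nonzero integral --- a contradiction. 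The condition $m<n$ enters only through that single power of $\sigma$, and no integrability beyond what the theorem already assumes is ever needed.
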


We prove Theorems \ref{th2.1} and \ref{th2.4} in Section \ref{sec2}. 
 Theorems \ref{th1.3} and \ref{th1.5} are deduced from Theorems \ref{th2.1} and \ref{th2.4} respectively. Since the proofs are very similar, below we discuss the plan of the proof of Theorem \ref{th1.3} only.

First, we note that we can replace matrices $Q_1, \ldots, Q_m$ by an orthonormal set of matrices $A_1, \ldots, A_m$ and quadratic forms $q_i$ by quadratic forms 
$$a_i(x)=\frac{1}{2} \langle A_i x, x \rangle \quad \text{for} \quad i=1, \ldots, m.$$
We let 
$$\alpha_i =\frac{1}{2} \tr A_i$$
and consider an equivalent system 
$$a_i(x)=\alpha_i \quad \text{for} \quad i=1, \ldots, m$$
of quadratic equations, see Section \ref{subsec1.3}.

Using Theorem \ref{th2.1}, we conclude that it suffices to prove that 
\begin{equation}\label{eq3.1}
\int_{{\mathbb R}^m} \det^{\quad -\frac{1}{2}} \left(I - \sqrt{-1} \sum_{i=1}^m \tau_i A_i\right) \exp\left\{ -\sqrt{-1} \sum_{i=1}^m \alpha_i \tau_i \right\} \ dt \ne 0,
\end{equation}
where the integral converges absolutely. Up to a scaling normalization factor, we rewrite the integral in polar coordinates as follows.

Let ${\mathbb S}^{m-1} \subset {\mathbb R}^m$ be the unit sphere endowed with the Haar probability measure. For $w \in {\mathbb S}^{m-1}$, $w=\left(\omega_1, \ldots, \omega_m\right)$, we define the matrix 
$$A(w)=\sum_{i=1}^m \omega_i A_i.$$ Up to a non-zero scaling factor, in polar coordinates the integral (\ref{eq3.1}) can be written as 
\begin{equation}\label{eq3.2}
\int_{{\mathbb S}^{m-1}} \left(\int_0^{+\infty} \tau^{m-1} \det^{\quad -\frac{1}{2}}\left(I - \sqrt{-1} \tau A(w)\right) \exp\left\{ -\frac{\sqrt{-1} \tau}{2} \tr A(w) \right\} \ d \tau \right) dw.
\end{equation}
The rest of the proof relies on an analysis of this integral.
As a first step, we show that the contribution of the tail  of the inside integral in (\ref{eq3.2}) is negligible. Namely, we prove in Lemma \ref{le5.1} that for any $w \in {\mathbb S}^{m-1}$, we have 
\begin{equation}\label{eq3.3}
\int_{5 \sqrt{m}}^{+\infty} \tau^{m-1} \left| \det^{\quad -\frac{1}{2}}\left( I - \sqrt{-1} \tau A(w)\right)\right| \ d\tau \ \leq \ \frac{1}{20 m} m^{m/2} e^{-3m}.
\end{equation}
In particular, this proves that the integral (\ref{eq3.2}) converges absolutely and that the integrals (\ref{eq3.2}) and (\ref{eq3.1}) are equal, up to a scaling factor that is the surface 
area of the unit sphere ${\mathbb S}^{m-1} \subset {\mathbb R}^m$.

This allows us to consider  the integration over the interval $[0,5 \sqrt{m}]$ in the inner integral in \eqref{eq3.2}.
To analyze this integral, denote by $\lambda_1(w), \ldots, \lambda_n(w)$ the eigenvalues of $A(w)$. 
A simple calculation  yields
\begin{align*}
  &\det^{\quad -\frac{1}{2}}\left(I - \sqrt{-1} \tau A(w)\right) \exp\left\{ -\frac{\sqrt{-1} \tau}{2} \tr A(w) \right\} \\
  &=  \exp\left\{ \frac{1}{2} \sum_{k=2}^{\infty} \frac{(\tau \sqrt{-1})^k}{k} \sum_{j=1}^n \lambda_j^k(w) \right\},
\end{align*}
see the derivation in \eqref{eq: det}. 
Note that the summation starts from $k=2$. This is achieved due to the first step in the argument allowing us to set $\alpha_i =\frac{1}{2} \tr A_i$.
Moreover, $\sum_{j=1}^n \lambda_j^2(w)=1$ for all $w \in {\mathbb S}^{m-1}$ due to orthonormality of the matrices $A_1, \ldots, A_m$.

Next, we divide the points $w \in {\mathbb S}^{m-1}$ into tame and wild. For a tame point, we show that the term corresponding to $k=2$ in the expression above is dominating which would mean that the expression above is close to $\exp\left\{ -\frac{\tau^2}{4}\right\} $.
To prove it, we need to control $\sum_{j=1}^n \lambda_j^k(w)$ for all $k \ge 3$.
However, as we show below, a control for $k=3,4$ turns out to be sufficient. 
More precisely, we classify a point $w \in {\mathbb S}^{m-1}$ as tame if
\[
  \left| \sum_{j=1}^n \lambda_j^3(w)\right| \ \leq \ \frac{1}{25m^{3/2}}  \quad \text{and} \quad \sum_{j=1}^n \lambda_j^4(w) \ \leq \ \frac{1}{625 m^2}.
\]
The second inequality here is a bound on the 4-Schatten norm of $A(w)$: $\|A(w)\|_{\mathrm{S_4}} \le 1/(625 m^2)$. 
In contrast to it, the first inequality bounds the third moment of the eigenvalues, and not the 3-Schatten norm, as we have to exploit the cancellation of positive and negative eigenvalues.

In Lemma \ref{le5.2}, we prove that if $w \in {\mathbb S}^{m-1}$ is tame, then
\begin{equation}\label{eq3.4}
\begin{split} 
&\Re\thinspace \int_0^{5\sqrt{m}} \tau^{m-1}\det^{\quad -\frac{1}{2}}\left( I - \sqrt{-1} \tau A(w)\right) \exp\left\{ -\frac{\sqrt{-1} \tau}{2} \tr A(w)\right\} \ 
d \tau \\ \geq \ &\frac{1}{2} \int_0^{5 \sqrt{m}} \tau^{m-1} \exp\left\{ -\frac{\tau^2}{4}\right\} \ d \tau \approx  2^{m-2} \Gamma\left(\frac{m}{2}\right).
\end{split}
\end{equation}

We note that the value of (\ref{eq3.4}) is much larger than the tail estimate (\ref{eq3.3}). Moreover, in Lemmas \ref{le4.2} and \ref{le4.3} we bound the expectations
\begin{equation}\label{eq3.5}
\EE \left(\sum_{j=1}^n \lambda_j^3(w)\right)^2 \ \leq \ \frac{120 \eta}{m(m+2)(m+4)} \quad \text{and} \quad \EE \sum_{j=1}^n \lambda_j^4(w) \ \leq \ \frac{3 \eta}{(m+2) m}.
\end{equation}
This is the point where the quantity $\left\| \sum_{i=1}^m A_i^2 \right\|_{\mathrm{op}}$ reveals itself. 
It turns out that both expectations above can be controlled in terms of this operator norm alone.

It follows then by the Markov inequality that a random $w \in {\mathbb S}^{m-1}$ is tame with probability at least $7/8$, and hence tame points $w \in {\mathbb S}^{m-1}$ contribute significantly to the integral (\ref{eq3.2}).

It remains to show that the contribution of wild points $w \in {\mathbb S}^{m-1}$ cannot offset the contribution of tame points. 

This relies on a concentration inequality for the 4-Schatten norm of matrices $A(w)$ on the unit sphere ${\mathbb S}^{m-1}$, which we derive in Lemma \ref{le4.5}. This inequality is leveraged against the deterioration of the bounds on the eigenvalues of $A(w)$ which occurs for the wild points.  
To this end, we partition the set of wild points into a number of subsets according to the size of $\|A(w)\|_{\mathrm{S_4}}$, and apply the concentration inequality to prove that the contribution of the points in each layer to the integral \eqref{eq3.2} is negligible.

This argument is carried out in Section \ref{sec6}.

 In Sections \ref{sec4} and \ref{sec5}, we do some preliminary work: we prove bounds (\ref{eq3.5}) as well as some other useful bounds on the eigenvalues of $A(w)$ in Section \ref{sec4}.
 In Section \ref{sec5} we derive \eqref{eq3.3} and show that a similar integral over the interval $[0,5 \sqrt{m}]$ can be controlled by $\|A(w)\|_{\mathrm{op}}$, which is in turn bounded in terms of $\|A(w)\|_{\mathrm{S_4}}$. 
 
 \section{Proofs of Theorems \ref{th2.1}  and \ref{th2.4}}\label{sec2}

\subsection{Enter Gaussian measure}\label{subsec2.1} We consider the standard Gaussian measure in ${\mathbb R}^n$ with density 
$$\frac{1}{(2 \pi)^{n/2}} e^{-\|x\|^2/2} \quad \text{where} \quad \|x\|=\sqrt{\xi_1^2 + \ldots + \xi_n^2} \quad \text{for} \quad x=\left(\xi_1, \ldots, \xi_n \right).$$
Considering a quadratic form $q(x) =\langle Qx, x \rangle$ as a random variable, we observe that 
$$\EE q=\tr Q,$$
so that the equation $q(x)=\tr Q$ ``holds on average".

The proof of Theorems \ref{th2.1} and \ref{th2.4} is based on a Fourier transform formula.
\begin{lemma}\label{le2.2} Let $Q_1, \ldots, Q_m$ be $n \times n$ real symmetric matrices and let 
$$q_i(x)=\frac{1}{2} \langle Q_i x, x \rangle \quad \text{for} \quad i=1, \ldots, m,$$
be the corresponding quadratic forms. Then for any real $\alpha_1, \ldots, \alpha_m$ and any real $\sigma > 0$, we have 
\begin{equation*}
\begin{split} & \frac{1}{(2 \pi)^{n/2}} \int_{{\mathbb R}^n} \exp\left\{ -\frac{\sigma^2}{2} \sum_{i=1}^m \left(q_i(x)-\alpha_i\right)^2 \right\} e^{-\|x\|^2/2} \ dx\\= 
&\frac{1}{\sigma^m (2\pi)^{m/2}} \int_{{\mathbb R}^m} \det^{\quad -\frac{1}{2}} \left(I -\sqrt{-1} \sum_{i=1}^m \tau_i Q_i \right) \exp\left\{ -\sqrt{-1} \sum_{i=1}^m \alpha_i \tau_i \right\}
e^{-\|t\|^2/2 \sigma^2} \ dt. 
\end{split}
\end{equation*}
\end{lemma}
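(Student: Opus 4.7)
The plan is to realize both sides as the same absolutely convergent double integral
\begin{equation*}
J := \frac{1}{\sigma^m (2\pi)^{(n+m)/2}}\int_{\mathbb{R}^n}\int_{\mathbb{R}^m} \exp\!\left\{\sqrt{-1}\sum_{i=1}^m\tau_i(q_i(x)-\alpha_i)\right\} e^{-\|x\|^2/2}\, e^{-\|t\|^2/(2\sigma^2)}\,dt\,dx
\end{equation*}
and to evaluate the inner integration in the two possible orders. The imaginary exponential has modulus $1$, while the two real Gaussian weights are jointly integrable over $\mathbb{R}^{n+m}$, so Fubini applies without further thought.

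Performing the $t$-integration first for fixed $x$, the exponent decouples across $\tau_1,\ldots,\tau_m$, and the $m$-dimensional integral factors into a product of one-dimensional Gaussian Fourier transforms
\begin{equation*}
\frac{1}{\sigma\sqrt{2\pi}}\int_{\mathbb{R}} e^{\sqrt{-1}\tau_i (q_i(x)-\alpha_i)}\, e^{-\tau_i^2/(2\sigma^2)}\,d\tau_i \;=\; \exp\!\left\{-\tfrac{\sigma^2}{2}(q_i(x)-\alpha_i)^2\right\},
\end{equation*}
each obtained by completing the square. Taking the product over $i$ and integrating against $(2\pi)^{-n/2}e^{-\|x\|^2/2}\,dx$ recovers the left-hand side of the lemma.

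Performing the $x$-integration first for fixed $t\in\mathbb{R}^m$ produces the right-hand side. Combining exponents, the $x$-integrand becomes $\exp\{-\tfrac12\langle M(t)x,x\rangle\}\cdot e^{-\sqrt{-1}\sum_i\tau_i\alpha_i}$, where $M(t):= I - \sqrt{-1}\sum_{i=1}^m \tau_i Q_i$ is a complex symmetric matrix with eigenvalues $1-\sqrt{-1}\lambda_j(t)$, all of real part $1$. The complex Gaussian integral identity
\begin{equation*}
\frac{1}{(2\pi)^{n/2}}\int_{\mathbb{R}^n}\exp\!\left\{-\tfrac12\langle M(t)x,x\rangle\right\}dx \;=\; \det^{\,-\frac{1}{2}} M(t)
\end{equation*}
then gives the determinantal factor on the right-hand side, in the branch of $\det^{-1/2}$ fixed in the paper (value $1$ at $t=0$); multiplying by the surviving Gaussian factor in $t$ and integrating reproduces the right-hand side exactly as stated.

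The only step with any real content is this last complex Gaussian identity. I would establish it by analytic continuation: both sides depend holomorphically on $\tau_1,\ldots,\tau_m$ on a connected neighborhood of $\mathbb{R}^m$ in $\mathbb{C}^m$ on which $\Re M$ remains positive definite, they agree (both equal $1$) at $t=0$, and hence they agree throughout $\mathbb{R}^m$. Everything else is Fubini and routine Gaussian algebra; no measure-theoretic or combinatorial obstacle is expected.
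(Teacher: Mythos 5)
Your proof is correct and follows essentially the same route as the paper's: both rest on the complex Gaussian determinant identity established by analytic continuation from the real positive-definite case, the one-dimensional Gaussian Fourier transform obtained by completing the square, and an interchange of the $x$- and $t$-integrations justified by Fubini (the paper applies Fubini implicitly when it integrates its pointwise identity (\ref{eq2.2.1}) against the Gaussian density in $t$). Your presentation, centered symmetrically on the double integral $J$, is a clean reorganization rather than a different argument.
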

\begin{proof} As is well-known, for a positive definite matrix $Q$ and the corresponding form 
$$q(x)=\frac{1}{2} \langle Qx, x \rangle$$
we have 
$$\frac{1}{(2 \pi)^{n/2}} \int_{{\mathbb R}^n} e^{-q(x)} \ dx = \det^{\quad -\frac{1}{2}} Q.$$
Consequently, for $t \in{\mathbb R}^m$, $t=\left(\tau_1, \ldots, \tau_m\right)$, in a sufficiently small neighborhood of $0$, we have 
$$\frac{1}{(2 \pi)^{n/2}} \int_{{\mathbb R}^n} \exp\left\{ \sum_{i=1}^m \tau_i q_i(x) \right\} e^{-\|x\|^2/2} \ dx = \det^{\quad -\frac{1}{2}} \left(I - \sum_{i=1}^m \tau_i Q_i \right).$$
Since both sides of the formula are analytic in $\tau_1, \ldots, \tau_m \in {\mathbb C}$ for $\Re \tau_1, \ldots, \Re \tau_m$ in a small neighborhood of $0$, we conclude that the above formula holds for all such 
$\tau_1, \ldots, \tau_m$ and that, in particular,
\begin{equation*}
\begin{split} & \frac{1}{(2 \pi)^{n/2}} \int_{{\mathbb R}^n} \exp\left\{ \sqrt{-1} \sum_{i=1}^m \tau_i q_i(x) \right\} e^{-\|x\|^2/2} \ dx \\= &\det^{\quad -\frac{1}{2}} \left(I - \sqrt{-1} \sum_{i=1}^m \tau_i Q_i \right) 
\end{split}
\end{equation*}
for all real $\tau_1, \ldots, \tau_m$. 

Therefore, 
\begin{equation}\label{eq2.2.1}
\begin{split}
&\frac{1}{(2 \pi)^{n/2}} \int_{{\mathbb R}^n} \exp\left\{ \sqrt{-1} \sum_{i=1}^m \tau_i \left( q_i(x) - \alpha_i \right) \right\} e^{-\|x\|^2/2} \ dx \\= &\det^{\quad -\frac{1}{2}} \left(I - \sqrt{-1} \sum_{i=1}^m \tau_i Q_i \right) \exp\left\{-\sqrt{-1} \sum_{i=1}^m \alpha_i \tau_i \right\}
 \end{split}
 \end{equation}
for all real $\tau_1, \ldots, \tau_m$. 

Next, we use a well-known formula: for $\sigma >0$ and any real  (or complex) $\alpha$, we have 
$$\frac{1}{\sigma \sqrt{2 \pi}} \int_{-\infty}^{+\infty} \exp\left\{\sqrt{-1} \alpha \tau\right\} \exp\left\{ -\frac{\tau^2}{2 \sigma^2}\right\} \ d\tau = \exp\left\{ -\frac{\alpha^2 \sigma^2}{2}\right\}.$$
Integrating both sides of (\ref{eq2.2.1}) for $i=1, \ldots, m$ over $\tau_i \in {\mathbb R}$ with density 
$$\frac{1}{\sigma \sqrt{2 \pi}} \exp\left\{ - \frac{\tau_i^2}{2 \sigma^2} \right\},$$ 
we get the desired formula.
\end{proof}

Now we are ready to prove Theorem \ref{th2.1}.

\begin{proof}[Proof of Theorem \ref{th2.1}] By Lemma \ref{le2.2}, for all $\sigma >0$, we have 
\begin{equation}\label{eq2.3.3}
\begin{split} & \sigma^m \int_{{\mathbb R}^n} \exp\left\{ -\frac{\sigma^2 }{2} \sum_{i=1}^m \left(q_i(x)-\alpha_i\right)^2 \right\} e^{-\|x\|^2/2} \ dx=\\
&(2 \pi)^{\frac{n-m}{2}} \int_{{\mathbb R}^m} \det^{\quad -\frac{1}{2}} \left(I -\sqrt{-1} \sum_{i=1}^m \tau_i Q_i \right) \exp\left\{ -\sqrt{-1} \sum_{i=1}^m \alpha_i \tau_i \right\}
e^{-\frac{\|t\|^2}{2 \sigma^2}} \ dt.
\end{split}
 \end{equation}
As $\sigma \longrightarrow +\infty$, the right hand side of (\ref{eq2.3.3}) converges to 
$$(2 \pi)^{\frac{n-m}{2}} \int_{{\mathbb R}^m} \det^{\quad -\frac{1}{2}} \left(I -\sqrt{-1} \sum_{i=1}^m \tau_i Q_i \right) \exp\left\{ -\sqrt{-1} \sum_{i=1}^m \alpha_i \tau_i \right\} \ne 0.$$
Suppose that the system (\ref{eq1.1.1}) has no solutions $x \in {\mathbb R}^n$. We intend to obtain a contradiction by showing that the left hand side of (\ref{eq2.3.3}) converges to $0$ 
as $\sigma \longrightarrow +\infty$.

Let 
$$\gamma=(2\pi)^{\frac{n-m}{2}} \int_{{\mathbb R}^m} \left| \det^{\quad -\frac{1}{2}} \left(I -\sqrt{-1} \sum_{i=1}^m \tau_i Q_i \right)\right| \ dt \ < \ +\infty.$$
Let us choose a $\rho > 0$, to be adjusted later. Then
\begin{equation*}
\begin{split} &\sigma^m \int_{x \in {\mathbb R}^n: \ \|x\| > \rho} \exp\left\{ -\frac{\sigma^2}{2} \sum_{i=1}^m \left(q_i(x)-\alpha_i\right)^2 \right\} e^{-\|x\|^2/2} \ dx \\
\leq \ &e^{-\rho^2/4} \sigma^m \int_{x \in {\mathbb R}^n: \ \|x\| > \rho} \exp\left\{ -\frac{\sigma^2}{2} \sum_{i=1}^m \left(q_i(x)-\alpha_i\right)^2 \right\} e^{-\|x\|^2/4} \ dx \\
\leq \ &e^{-\rho^2/4} \sigma^m \int_{{\mathbb R}^n}  \exp\left\{ -\frac{\sigma^2}{2} \sum_{i=1}^m \left(q_i(x)-\alpha_i\right)^2 \right\} e^{-\|x\|^2/4} \ dx \\
=&e^{-\rho^2/4} 2^{n/2}  \sigma^m  \int_{{\mathbb R}^n} \exp\left\{ -\frac{\sigma^2}{2} \sum_{i=1}^m \left(2q_i(x)-\alpha_i\right)^2 \right\} e^{-\|x\|^2/2} \ dx  \\
=&e^{-\rho^2/4} 2^{n/2}  \sigma^m \int_{{\mathbb R}^n} \exp\left\{ -{\frac{(2\sigma)^2}{2}} \sum_{i=1}^m \left(q_i(x)-\frac{\alpha_i}{2}\right)^2 \right\} e^{-\|x\|^2/2} \ dx  \\
=&e^{-\rho^2/4} 2^{n/2} 2^{-m} (2\sigma)^m \int_{{\mathbb R}^n} \exp\left\{ -{\frac{(2\sigma)^2}{2}} \sum_{i=1}^m \left(q_i(x)-\frac{\alpha_i}{2}\right)^2 \right\} e^{-\|x\|^2/2} \ dx.
 \end{split}
 \end{equation*}
 
From Lemma \ref{le2.2}, 
\begin{equation*}
\begin{split} &(2 \sigma)^m \int_{{\mathbb R}^n} \exp\left\{ -\frac{(2\sigma)^2}{2} \sum_{i=1}^m \left(q_i(x)-\frac{\alpha_i}{2}\right)^2 \right\} e^{-\|x\|^2/2} \ dx \\   \leq \ 
& (2 \pi)^{\frac{n-m}{2}} \int_{{\mathbb R}^m} 
\left| \det^{\quad -\frac{1}{2}} \left(I - \sqrt{-1} \sum_{i=1}^m \tau_i Q_i \right) \right| \ dt =\gamma.
\end{split}
\end{equation*}

Summarizing,
\begin{equation*}
\begin{split} &\sigma^m \int_{x \in {\mathbb R}^n: \ \|x\| > \rho} \exp\left\{ -\frac{\sigma^2}{2} \sum_{i=1}^m \left(q_i(x)-\alpha_i\right)^2 \right\} e^{-\|x\|^2/2} \ dx \\ \leq \ 
&e^{-\rho^2/4} 2^{n/2} 2^{-m} \gamma. 
\end{split}
\end{equation*}
Given $\epsilon > 0$, we choose $\rho(\epsilon) > 0$ such that 
$$e^{-\rho^2(\epsilon)/4} 2^{n/2} 2^{-m} \gamma \ \leq \ \frac{\epsilon}{2},$$
so that for all $\sigma >0$ we have 
\begin{equation}\label{eq2.3.4}
\sigma^m \int_{x \in {\mathbb R}^n: \ \|x\| > \rho(\epsilon)} \exp\left\{ -\frac{\sigma^2}{2} \sum_{i=1}^m \left(q_i(x)-\alpha_i\right)^2 \right\} e^{-\|x\|^2/2} \ dx \ \leq \ \frac{\epsilon}{2}.\end{equation}

If the system (\ref{eq1.1.1}) has no solution then for some $\delta(\epsilon) >0$, we have
$$\sum_{i=1}^m \left(q_i(x) - \alpha_i\right)^2 \ \geq \ \delta(\epsilon) \quad \text{provided} \quad \|x\| \leq \rho(\epsilon) $$
and hence
\begin{equation*}
\begin{split} &\sigma^m \int_{x \in {\mathbb R}^n:\ \|x\| \leq \rho(\epsilon)} \exp\left\{ -\frac{\sigma^2}{2} \sum_{i=1}^m \left(q_i(x)-\alpha_i\right)^2 \right\} \ dx \\ \leq \ 
&\sigma^m \rho^n(\epsilon) \nu_n \exp\left\{ -\frac{\sigma^2 \delta(\epsilon)}{2} \right\}, 
\end{split}
\end{equation*}
where $\nu_n$ is the volume of the unit ball in ${\mathbb R}^n$. Therefore, there is $\sigma_0(\epsilon) > 0$ such that for all $\sigma > \sigma_0(\epsilon)$, we have 
\begin{equation}\label{eq2.3.5}
\sigma^m \int_{x \in {\mathbb R}^n:\ \|x\| \leq \rho(\epsilon)} \exp\left\{ -\frac{\sigma^2}{2} \sum_{i=1}^m \left(q_i(x)-\alpha_i\right)^2 \right\} \ dx \ \leq \ \frac{\epsilon}{2}.
\end{equation}
Combining (\ref{eq2.3.4}) and (\ref{eq2.3.5}), we conclude that the limit of the left hand side of (\ref{eq2.3.3}) is $0$ as $\sigma \longrightarrow +\infty$, which is the desired contradiction.
\end{proof}

The proof of Theorem \ref{th2.4} is similar.

\begin{proof}[Proof of Theorem \ref{th2.4}]
 Seeking a contradiction, suppose that the only solution to the system is $x=0$. Then for some $\delta >0$ we have 
 \begin{equation}\label{eq2.4.1}
\sum_{i=1}^m q_i^2(x) \ \geq \ \delta \quad \text{for all} \quad x \in {\mathbb R}^n \quad \text{such that} \quad \|x\|=1. 
\end{equation}
From Lemma \ref{le2.2}, for any $\sigma >0$, we have 
\begin{equation}\label{eq2.4.2}
\begin{split}
 & \sigma^m  \int_{{\mathbb R}^n} \exp\left\{ - \frac{\sigma^2}{2} \sum_{i=1}^m q_i^2(x)\right\} e^{-\|x\|^2/2} \ dx \\= &(2 \pi)^{\frac{n-m}{2}} \int_{{\mathbb R}^m} 
\det^{\quad -\frac{1}{2}} \left(I - \sqrt{-1} \sum_{i=1}^m \tau_i Q_i \right) e^{-\frac{\|t\|^2}{2 \sigma^2}} \ dt.  
\end{split}
\end{equation}
From (\ref{eq2.4.1}), the left hand side of (\ref{eq2.4.2}) is bounded above (we use polar coordinates) by
$$\omega_n \sigma^m \int_0^{+\infty} \exp\left\{ -\frac{\delta \sigma^2 \tau^2}{2} \right\} \tau^{n-1} e^{-\tau^2/2} \ d \tau,$$
where $\omega_n$ is the surface area of the unit sphere in ${\mathbb R}^n$. Using the substitution $\xi = \sigma \tau$, we rewrite the integral as 
$$\omega_n \sigma^{m-n}  \int_0^{+\infty} \exp\left\{-\frac{ \delta \xi^2}{2} \right\} \xi^{n-1} e^{-\xi^2/2 \sigma^2} \ d \xi$$
and observe that it converges to $0$ as $\sigma \longrightarrow +\infty$ (recall that $m < n$). On the other hand, the right hand side of (\ref{eq2.4.2}) converges to 
$$(2 \pi)^{\frac{n-m}{2}} \int_{{\mathbb R}^m} \det^{\quad -\frac{1}{2}} \left(I -\sqrt{-1} \sum_{i=1}^m \tau_i Q_i \right) \ne 0,$$
which is the desired contradiction.
\end{proof}

In the rest of the paper, we deduce Theorem \ref{th1.3} from Theorem \ref{th2.1} and Theorem \ref{th1.5} from Theorem \ref{th2.4}.

\section{Controlling eigenvalues}\label{sec4}

\subsection{Preliminaries}\label{subsec4.1} In the space of $n \times n$ real matrices we consider the standard inner product, see Section \ref{subsec1.3}. The corresponding Euclidean norm is called the {\it Hilbert-Schmidt} or {\it Frobenius} norm:
$$\|A\|_{\mathrm{HS}}=\sqrt{\langle A, A \rangle} = \sqrt{\tr (A^{\ast} A)}.$$
If, in addition, $A$ is symmetric with eigenvalues $\lambda_1, \ldots, \lambda_n$, we have 
$$\|A\|_{\mathrm{HS}}=\sqrt{\sum_{j=1}^n \lambda_j^2}$$
while for the operator norm we have 
$$\|A\|_{\mathrm{op}}= \max_{j=1, \ldots, n} |\lambda_j|.$$
We will also consider the 4-Schatten norm defined by
$$\|A\|_{\mathrm{S_4}}=\left(\sum_{j=1}^n \lambda_j^4 \right)^{1/4}.$$
This is indeed a norm in the space of $n \times n$ symmetric matrices, see, for example, Chapter 1 of \cite{Ta12}. In particular, we will use that 
\begin{equation}\label{eq4.1.1}
  |\ \|A\|_{\mathrm{S_4}}-\|B\|_{\mathrm{S_4}}\ | \ \leq \ \|A-B\|_{\mathrm{S_4}}.
\end{equation} 
Also, we observe that for a symmetric matrix $A$ with eigenvalues $\lambda_1, \ldots, \lambda_n$, we have 
$$\sum_{j=1}^n \lambda_j^4 \ \leq \ \left( \max_{j=1, \ldots, n} \lambda_j^2 \right) \sum_{j=1}^n \lambda_j^2,$$
from which it follows that 
\begin{equation}\label{eq4.1.2}
\|A\|_{\mathrm{S_4}} \ \leq \ \|A\|_{\mathrm{op}}^{1/2} \|A\|_{\mathrm{HS}}^{1/2}.
\end{equation}

Suppose that $B$ is a positive semidefinite symmetric matrix with eigenvalues $\lambda_1. \ldots, \lambda_n$. Then 
$$\|B\|^2_{\mathrm{HS}}= \sum_{j=1}^n \lambda_j^2 \ \leq \ \left( \max_{j=1, \ldots, n} \lambda_j \right) \sum_{j=1}^n \lambda_j= \|B\|_{\mathrm{op}}\left( \tr B\right).$$
We will apply the inequality in the following situation: Let $A_1, \ldots, A_m$ be an orthonormal set of symmetric matrices, so that 
$$\langle A_i, A_j \rangle =\tr (A_i A_j) =\begin{cases} 1 &\text{if\ } i=j, \\ 0 &\text{if\ } i \ne j. \end{cases}$$
Then the matrix 
$$B=\sum_{i=1}^m A_i^2$$
is symmetric positive semidefinite and hence we have 
\begin{equation}\label{eq4.1.3}
\left\| \sum_{i=1}^m A_i^2 \right\|_{\mathrm{HS}}^2 \ \leq \ m \left\| \sum_{i=1}^m A_i^2 \right\|_{\mathrm{op}}.
\end{equation}

We also remark that $\langle A, B \rangle \geq 0$ for any two $n \times n$ symmetric positive semidefinite matrices.

We will use the following inequality. Let $A_1, \ldots, A_m$ be an orthonormal set of $n \times n$ symmetric matrices and let $B$ be another $n \times n$, not necessarily symmetric, real matrix. Then 
$$\langle A_i, B \rangle = \tr (A_i B) \quad \text{for} \quad i=1, \ldots, m$$
are the coordinates of the orthogonal projection of $B$ onto $\spa\left(A_1, \ldots, A_m\right)$ and hence 
\begin{equation}\label{eq4.1.4}
\sum_{i=1}^m \tr^2(A_i B) \ \leq \ \|B\|_{\mathrm{HS}}^2.
\end{equation}

Finally, we will need moments of a random vector $w \in {\mathbb S}^{m-1}$, $w=\left(\omega_1, \ldots, \omega_m\right)$. Namely, for integer $\alpha_1, \ldots, \alpha_m \geq 0$, we have 
\begin{equation}\label{eq4.1.5}
\EE \omega_1^{\alpha_1} \cdots \omega_m^{\alpha_m} =0 \quad \text{provided at least one $\alpha_i$ is odd}
\end{equation}
and 
$$\EE \omega_1^{\alpha_1} \cdots \omega_m^{\alpha_m} = \frac{\Gamma\left(\frac{m}{2}\right) \prod_{i=1}^m \Gamma\left( \beta_i + \frac{1}{2}\right)}{\Gamma^m\left(\frac{1}{2}\right) \Gamma\left( \beta_1 + \ldots + \beta_m + \frac{m}{2}\right)} \quad \text{provided} \quad \alpha_i=2 \beta_i \quad \text{are even,}$$
see, for example, \cite{Ba02b}. 
In particular, we will use the following values:
\begin{equation}\label{eq4.1.6}
\begin{split} 
\EE \omega_i^2 \omega_j^2 = &\frac{1}{m(m+2)} \quad \text{for} \quad 1 \leq i \ne j \leq m, \\
\EE \omega_i^4 =&\frac{3}{m(m+2)} \quad \text{for} \quad i=1, \ldots, m, \\
\EE \omega_i^2 \omega_j^2 \omega_k^2 = &\frac{1}{m(m+2)(m+4)} \quad \text{for distinct} \quad 1 \leq i, j, k \leq m, \\
\EE \omega_i^2 \omega_j^4 = &\frac{3}{m(m+2)(m+4)} \quad \text{for} \quad 1 \leq i \ne j \leq m \quad \text{and} \\
\EE \omega_i^6 = &\frac{15}{m(m+2)(m+4)} \quad \text{for} \quad i=1, \ldots, m.
\end{split}
\end{equation}

In what follows, we fix an orthonormal set $A_1, \ldots, A_m$ of $n \times n$ symmetric matrices. For a random $w \in {\mathbb S}^{m-1}$, 
$w=\left(\omega_1, \ldots, \omega_m\right)$, sampled from the Haar probability measure in ${\mathbb S}^{m-1}$, we define
$$A(w)=\sum_{i=1}^m \omega_i A_i$$
and let $\lambda_1(w), \ldots, \lambda_n(w)$ be the eigenvalues of $A(w)$. Here is our first estimate.
\begin{lemma}\label{le4.2}
We have 
$$\EE \left( \sum_{j=1}^n \lambda_j^3(w)\right)^2 \ \leq \ \frac{120}{(m+2)(m+4)} \left\| \sum_{i=1}^m A_i^2 \right\|_{\mathrm{op}}.$$
\end{lemma}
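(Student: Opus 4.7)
My approach interprets $\sum_{j=1}^n \lambda_j^3(w) = \tr A(w)^3 = \sum_{i,j,k=1}^m \omega_i\omega_j\omega_k T_{ijk}$, where $T_{ijk} := \tr(A_iA_jA_k)$ is fully symmetric in $(i,j,k)$: cyclic invariance of the trace combined with $A_i^T=A_i$ gives $T_{kji} = \tr((A_iA_jA_k)^T) = T_{ijk}$. Squaring and taking expectation amounts to a sixth-order moment on the sphere. Rather than enumerate many cases from \eqref{eq4.1.6} by hand, I would use the Gaussian trick: let $g \sim N(0,I_m)$, so $g = \|g\|\,w$ with $w$ Haar on $\mathbb{S}^{m-1}$ and independent of $\|g\|$. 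Since $\tr A(g)^3 = \|g\|^3\,\tr A(w)^3$,
\[
  \EE(\tr A(g)^3)^2 \;=\; \EE\|g\|^6\cdot \EE(\tr A(w)^3)^2 \;=\; m(m+2)(m+4)\,\EE(\tr A(w)^3)^2,
\]
reducing the problem to computing $\EE(\tr A(g)^3)^2$ via Isserlis' (Wick's) theorem.

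Isserlis expresses $\EE g_ig_jg_kg_{i'}g_{j'}g_{k'}$ as a sum over the $15$ perfect matchings of the six positions. I would split these into the $6$ \emph{cross} matchings, whose three pairs all cross between the triples $\{i,j,k\}$ and $\{i',j',k'\}$, and the $9$ \emph{within} matchings, with one pair inside each triple plus a single crossing pair. Full symmetry of $T_{ijk}$ collapses every cross matching to $\sum_{i,j,k}T_{ijk}^2$, and every within matching to $\sum_k \tr(BA_k)^2$, where $B := \sum_{i=1}^m A_i^2$ (using that $\sum_a T_{aak} = \tr(BA_k)$). Hence
\[
  \EE(\tr A(g)^3)^2 \;=\; 6\sum_{i,j,k}T_{ijk}^2 \;+\; 9\sum_{k=1}^m \tr(BA_k)^2.
\]

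It remains to bound both sums by $\tr(B^2)$. The second follows immediately from \eqref{eq4.1.4} applied to the symmetric matrix $B$, using orthonormality of $A_1,\ldots,A_m$. For the first, I would fix $i,j$ and view $T_{ijk} = \langle A_k, A_iA_j\rangle_{\mathrm{HS}}$; since $\langle A_k,\cdot\rangle$ only sees the symmetric part of its argument, Parseval gives $\sum_k T_{ijk}^2 \leq \|A_iA_j\|_{\mathrm{HS}}^2 = \tr(A_i^2A_j^2)$, and summing over $i,j$ produces $\tr(B^2)$. Combined with \eqref{eq4.1.3}, i.e.\ $\tr(B^2) = \|B\|_{\mathrm{HS}}^2 \leq m\|B\|_{\mathrm{op}}$, this yields $\EE(\tr A(g)^3)^2 \leq 15m\|B\|_{\mathrm{op}}$, whence $\EE(\sum_j \lambda_j^3(w))^2 \leq 15/[(m+2)(m+4)]\cdot\|B\|_{\mathrm{op}}$, comfortably inside the claimed $120$. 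No single step is genuinely hard; what requires care is the Wick bookkeeping (leveraging full symmetry of $T$) and the Parseval bound that passes $\sum_k T_{ijk}^2$ through $\|A_iA_j\|_{\mathrm{HS}}^2$.
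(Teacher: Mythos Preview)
Your proof is correct and takes a genuinely different route from the paper. The paper expands $\tr A(w)^3$ directly in the $\omega_i$, squares, and computes sphere moments case by case via \eqref{eq4.1.5}--\eqref{eq4.1.6}; this produces five terms $T_1,\ldots,T_5$ (distinguishing distinct triples, one repeated index, all equal), each of which is then bounded separately by $\|\sum_i A_i^2\|_{\mathrm{HS}}^2$, yielding the constant $120$. Your Gaussian lift replaces the sphere moments by Wick/Isserlis, and your observation that $T_{ijk}$ is fully symmetric lets all fifteen pairings collapse into just two types, giving the exact identity $\EE(\tr A(g)^3)^2 = 6\sum_{i,j,k}T_{ijk}^2 + 9\sum_k \tr(BA_k)^2$. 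Both approaches finish with \eqref{eq4.1.4} and \eqref{eq4.1.3}, but yours avoids the $T_1$--$T_5$ bookkeeping entirely and delivers the sharper constant $15$ in place of $120$. The paper's method has the minor advantage of being self-contained within the sphere moment formulas already recorded, while your approach buys a cleaner structure and a factor-of-eight improvement in the constant at the cost of invoking the (standard) Gaussian representation $g=\|g\|\,w$ and Wick's formula.
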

\begin{proof} We have 
\begin{equation*}
\begin{split}
&\sum_{j=1}^n \lambda_j^3(w)=\tr\left( \sum_{i=1}^m \omega_i A_i \right)^3=\sum_{(i, j, k) \text{\ distinct}} \omega_i \omega_j \omega_k \tr (A_i A_j A_k) \\
&\qquad + \sum_{(i, j):\ i \ne j}  \omega_i^2 \omega_j \tr (A_i^2 A_j) + \sum_{(i, j):\ i \ne j} \omega_i \omega_j^2 \tr (A_i A_j^2) \\
&\qquad +\sum_{(i, j): \ i \ne j} \omega_i^2 \omega_j \tr (A_i A_j A_i)+ \sum_{i=1}^m \omega_i^3 \tr A_i^3 \\
=&\sum_{(i, j, k) \text{\ distinct}} \omega_i \omega_j \omega_k \tr (A_i A_j A_k) +3 \sum_{(i, j):\ i \ne j} \omega_i^2 \omega_j \tr (A_i^2 A_j) + \sum_{i=1}^m \omega_i^3 \tr A_i^3.
\end{split}
\end{equation*}
Using (\ref{eq4.1.5}) and (\ref{eq4.1.6}), we write 
$$\EE \left( \sum_{j=1}^n \lambda^3(w)\right)^2 = \frac{T_1 +27 T_2 + 15 T_3 + 18 T_4 + 9T_5}{m(m+2)(m+4)},$$
where
\begin{equation*}
\begin{split}
T_1 =& \sum_{\substack{(i, j, k) \text{\ distinct} \\ (i_1, j_1, k_1) \text{\ is a permutation of\ } (i_, j, k)}} \tr (A_i A_j A_k) \tr (A_{i_1} A_{j_1} A_{k_1}) \\
T_2=&\sum_{(i, j):\ i \ne j} \tr^2 (A_i^2 A_j) \\
T_3=&\sum_{i=1}^m \tr^2 (A_i^3) \\
T_4=&\sum_{(i, j):\ i \ne j} \tr (A_i^2 A_j) \tr (A_j^3) \quad \text{and}\\
T_5=&\sum_{(i, j, k) \text{\ distinct}} \tr (A_i^2 A_j) \tr (A_k^2 A_j).
\end{split}
\end{equation*}
Next, we bound $T_1$, $T_2$, $T_3$, $T_4$ and $T_5$.

Applying (\ref{eq4.1.4}) with $B=A_j A_k$, we obtain 
$$\sum_{i=1}^m \tr^2(A_i A_j A_k) \ \leq \ \|A_j A_k\|_{\mathrm{HS}}^2 = \tr (A_k A_j^2 A_k)=\tr (A_j^2 A_k^2)$$
and hence
$$\sum_{(i, j, k) \text{\ distinct}} \tr^2(A_i A_j A_k) \ \leq \ \sum_{(j, k):\ j \ne k} \tr (A_j^2 A_k^2) \ \leq \ \left\| \sum_{i=1}^m A_i^2 \right\|_{\mathrm{HS}}^2.$$
By the Cauchy - Schwarz inequality, for every permutation $\sigma$ of $\{1, 2, 3\}$, we obtain 
\begin{equation*}
\begin{split} &\left| \sum_{(i_1, i_2, i_3) \text{\ distinct}} \tr \left(A_{i_1} A_{i_2} A_{i_3}\right) \tr \left( A_{i_{\sigma(1)}} A_{i_{\sigma(2)}} A_{i_{\sigma(3)}}\right) \right| \ \leq \ 
\sum_{(i, j, k) \text{\ distinct}} \tr^2 (A_i A_j A_k) \\ &\qquad \leq \left\| \sum_{i=1}^m A_i^2 \right\|^2_{\mathrm{HS}}
\end{split}
\end{equation*}
and hence 
$$|T_1| \ \leq \ 6 \left\| \sum_{i=1}^m A_i^2 \right\|_{\mathrm{HS}}^2.$$
Applying (\ref{eq4.1.4}) with $B=A_i^2$, we conclude that 
\begin{equation*}
\sum_{j=1}^m \tr^2(A_i^2 A_j) = \sum_{j=1}^m \tr^2 (A_j A_i^2) \ \leq \ \| A_i^2\|^2_{\mathrm{HS}}
\end{equation*}
and hence 
$$|T_2| \ \leq \ \sum_{i=1}^m \left\| A_i^2\right\|^2_{\mathrm{HS}} \ \leq \ \left\| \sum_{i=1}^m A_i^2 \right\|^2_{\mathrm{HS}},$$
where the last inequality follows since the matrices $A_1^2, \ldots, A_m^2$ are symmetric positive semidefinite and hence 
$$\langle A_i^2, A_j^2\rangle \geq 0 \quad \text{for all} \quad i, j.$$
Applying the Cauchy - Schwarz inequality, we obtain 
\begin{equation}\label{eq4.2.1}
| \tr A_i^3| =| \langle A_i, A_i^2 \rangle| \ \leq \ \|A_i\|_{\mathrm{HS}} \|A_i^2\|_{\mathrm{HS}}=\|A_i^2\|_{\mathrm{HS}}
\end{equation}
and hence 
$$|T_3| \ \leq \ \sum_{i=1}^m \|A_i\|^2_{\mathrm{HS}} \ \leq \ \left\| \sum_{i=1}^m A_i^2 \right\|^2_{\mathrm{HS}}.$$
To bound $T_4$ and $T_5$ we combine some of the previously obtained estimates. 

Applying the Cauchy - Schwarz inequality, (\ref{eq4.1.4}) with $B=\sum_{i=1}^m A_i^2$ and (\ref{eq4.2.1}), we obtain
\begin{equation*}
\begin{split}
&\left| \sum_{j=1}^m \sum_{i=1}^m \tr (A_i^2 A_j) \tr (A_j^3) \right| = \left| \sum_{j=1}^m \tr \left(A_j \sum_{i=1}^m A_i^2 \right) \tr (A_j^3) \right| \\
&\qquad \leq \left| \sum_{j=1}^m \tr^2 \left(A_j \sum_{i=1}^m A_i^2 \right) \right|^{1/2} \left| \sum_{j=1}^m \tr^2 (A_j^3)\right|^{1/2} \\
&\qquad \leq \left\|\sum_{i=1}^m A_i^2 \right\|_{\mathrm{HS}} \left( \sum_{j=1}^m \left\| A_j^2 \right\|^2_{\mathrm{HS}}\right)^{1/2} \ \leq \ \left\| \sum_{i=1}^m A_i^2 \right\|^2_{\mathrm{HS}}.
\end{split}
\end{equation*}
Therefore, using (\ref{eq4.2.1}), we get 
$$|T_4| = \left| \sum_{j=1}^m \sum_{i=1}^m \tr (A_i^2 A_j) \tr (A_j^3) - \sum_{i=1}^m \tr^2 (A_i^3) \right| \ \leq \ 2 \left\| \sum_{i=1}^m A_i^2 \right\|^2_{\mathrm{HS}}.$$
It remains to bound $T_5$. We have 
\begin{equation*}
\begin{split}
T_5 =&\sum_{j=1}^m  \sum_{\substack{(i, k): \\ i \ne j, k \ne j}} \tr (A_i^2 A_j) \tr (A_k^2 A_j) - \sum_{j=1}^m \sum_{i:\ i \ne j} \tr^2(A_i^2 A_j) \\
=&\sum_{j=1}^m \left(\sum_{i:\ i \ne j} \tr(A_i^2 A_j) \right)^2 - T_2.
\end{split}
\end{equation*}
Since 
$$0 \ \leq \ T_2 \ \leq \ \left\| \sum_{i=1}^m A_i^2 \right\|^2_{\mathrm{HS}},$$
we have 
$$|T_5| \ \leq \ \max\left\{ \sum_{j=1}^m \left(\sum_{i: i \ne j} \tr(A_i^2 A_j) \right)^2, \quad  \left\| \sum_{i=1}^m A_i^2\right\|^2_{\mathrm{HS}} \right\}.$$
Now,
\begin{equation*}
\begin{split}
&\left(\sum_{i:\ i \ne j} \tr (A_i^2 A_j)\right)^2=\left( -\tr(A_j^3) + \sum_{i=1}^m \tr(A_i^2 A_j) \right)^2 \\
&\qquad = \tr^2 (A_j^3) -2 \tr (A_j^3) \sum_{i=1}^m \tr (A_i^2 A_j) + \left(\sum_{i=1}^m \tr(A_i^2 A_j)\right)^2 \\
&\qquad =\tr^2 (A_j^3) -2 \tr (A_j^3)  \tr\left(A_j \sum_{i=1}^m A_i^2\right)+ \tr^2\left( A_j \sum_{i=1}^m A_i^2 \right)
\end{split}
\end{equation*}
and hence
\begin{equation*}
\begin{split}
&\sum_{j=1}^m \left(\sum_{i: i \ne j} \tr(A_i^2 A_j) \right)^2\\&\qquad = \sum_{j=1}^m \tr^2 (A_j^3)  -2 \sum_{j=1}^m \tr (A_j^3) \tr\left(A_j \sum_{i=1}^m A_i^2 \right) +
\sum_{j=1}^m  \tr^2 \left(A_j \sum_{i=1}^m A_i^2\right)
\end{split}
\end{equation*}
By (\ref{eq4.2.1}), we get
$$\sum_{j=1}^m \tr^2 (A_j^3) \ \leq \ \sum_{j=1}^m \|A_j^2\|^2_{\mathrm{HS}} \ \leq \ \left\| \sum_{i=1}^m A_i^2 \right\|^2_{\mathrm{HS}}.$$
Then, from the  the Cauchy - Schwarz inequality, (\ref{eq4.2.1}) and (\ref{eq4.1.4}) with $B=\sum_{i=1}^m A_i^2$, we get
\begin{equation*}
\begin{split}
&\left| \sum_{j=1}^m \tr (A_j^3) \tr \left(A_j \sum_{i=1}^m A_i^2 \right) \right| \ \leq \ \left(\sum_{j=1}^m \tr^2 (A_j^3) \right)^{1/2} 
\left(\sum_{j=1}^m \tr^2 \left(A_j \sum_{i=1}^m A_i^2 \right) \right)^{1/2} \\ &\quad \leq \left\|\sum_{i=1}^m A_i^2 \right\|_{\mathrm{HS}}^2
\end{split}
\end{equation*}
and from (\ref{eq4.1.4})
$$\sum_{j=1}^m  \tr^2 \left(A_j \sum_{i=1}^m A_i^2\right) \ \leq \ \left\| \sum_{i=1}^m A_i^2 \right\|^2_{\mathrm{HS}}.$$
Thus 
$$|T_5| \ \leq \ 4 \left\| \sum_{i=1}^m A_i^2 \right\|^2_{\mathrm{HS}}.$$
Summarizing,
$$\EE \left(\sum_{j=1}^n \lambda_j^3(w)\right)^2 \ \leq \ \frac{120}{m(m+2)(m+4)} \left\| \sum_{i=1}^m A_i^2 \right\|^2_{\mathrm{HS}} \ \leq \
\frac{120}{(m+2)(m+4)} \left\| \sum_{i=1}^m A_i^2 \right\|_{\mathrm{op}},$$
where the last inequality follows by (\ref{eq4.1.3}).
\end{proof}

Next, we bound the 4th moment of the eigenvalues.
\begin{lemma}\label{le4.3}
We have
$$\EE \left(\sum_{j=1}^n \lambda_j^4(w)\right) \ \leq \ \frac{3}{m+2} \left\|\sum_{i=1}^m A_i^2 \right\|_{\mathrm{op}}.$$
\end{lemma}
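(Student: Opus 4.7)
My plan is to expand $\sum_{j=1}^n \lambda_j^4(w) = \tr A(w)^4$ as a multilinear polynomial in the coordinates of $w$ and integrate term by term against the Haar measure on ${\mathbb S}^{m-1}$. Writing
$$\tr A(w)^4 = \sum_{a,b,c,d=1}^m \omega_a\omega_b\omega_c\omega_d \, \tr(A_aA_bA_cA_d),$$
the relevant moments, read off from (\ref{eq4.1.6}), fit into the single compact identity
$$\EE \omega_a\omega_b\omega_c\omega_d = \frac{1}{m(m+2)}\bigl(\delta_{ab}\delta_{cd} + \delta_{ac}\delta_{bd} + \delta_{ad}\delta_{bc}\bigr),$$
as one checks on the three cases $a=b=c=d$, $\{a=b,\,c=d\}$ and all-distinct.

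Substituting this into the expansion and using cyclic invariance of the trace on the third pairing, the $\delta_{ab}\delta_{cd}$ and $\delta_{ad}\delta_{bc}$ terms both collapse to $\sum_{a,b} \tr(A_a^2 A_b^2)=\|B\|_{\mathrm{HS}}^2$, where $B = \sum_i A_i^2$, while the middle pairing $\delta_{ac}\delta_{bd}$ produces $\sum_{a,b}\tr((A_aA_b)^2)$. Altogether
$$\EE \tr A(w)^4 \;=\; \frac{1}{m(m+2)}\Bigl[\, 2\|B\|_{\mathrm{HS}}^2 \,+\, \sum_{a,b=1}^m \tr((A_aA_b)^2)\,\Bigr].$$

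The main technical obstacle is the middle sum. Unlike $\tr(A_a^2A_b^2)\ge 0$, the quantity $\tr((A_aA_b)^2)$ need not be nonnegative, because the product $A_aA_b$ of two symmetric matrices is generally not symmetric. The key observation will be the elementary Cauchy--Schwarz-type inequality $|\tr(M^2)| \le \|M\|_{\mathrm{HS}}^2$, valid for any square matrix $M$ and verified directly on entries. Applied with $M = A_aA_b$ and using the symmetry of $A_a, A_b$, this yields
$$|\tr((A_aA_b)^2)| \;\le\; \|A_aA_b\|_{\mathrm{HS}}^2 \;=\; \tr\bigl(A_b A_a^2 A_b\bigr) \;=\; \tr(A_a^2 A_b^2),$$
and summing over $a,b$ gives $\bigl|\sum_{a,b} \tr((A_aA_b)^2)\bigr| \le \|B\|_{\mathrm{HS}}^2$.

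Combining the two estimates, $\EE \tr A(w)^4 \le \frac{3}{m(m+2)}\|B\|_{\mathrm{HS}}^2$. Finally I invoke (\ref{eq4.1.3}), which gives $\|B\|_{\mathrm{HS}}^2 \le m\|B\|_{\mathrm{op}}$ for the positive semidefinite matrix $B$, to arrive at the claimed bound $\frac{3}{m+2}\|B\|_{\mathrm{op}}$. Beyond the sign issue for $\tr((A_aA_b)^2)$, everything is routine combinatorial bookkeeping followed by one PSD estimate, so I expect no further difficulties.
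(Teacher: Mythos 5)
Your proof is correct and takes essentially the same route as the paper: both expand $\tr A(w)^4$ in the $\omega$-monomials, integrate using the fourth moments of the uniform measure on ${\mathbb S}^{m-1}$, bound the cross term $\tr(A_aA_bA_aA_b)$ by $\tr(A_a^2A_b^2)$ via Cauchy--Schwarz for the Hilbert--Schmidt inner product, and finish with \eqref{eq4.1.3}. The only cosmetic difference is that you package the moment computation through the Isserlis-type identity $\EE\,\omega_a\omega_b\omega_c\omega_d = \tfrac{1}{m(m+2)}(\delta_{ab}\delta_{cd}+\delta_{ac}\delta_{bd}+\delta_{ad}\delta_{bc})$, whereas the paper enumerates index patterns directly; the resulting decomposition $T_1+2T_2+3T_3 = 2\|B\|_{\mathrm{HS}}^2 + \sum_{a,b}\tr((A_aA_b)^2)$ is identical.
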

\begin{proof} Using (\ref{eq4.1.5}) and (\ref{eq4.1.6}), we write
\begin{equation*}
\begin{split}
&\EE\left(\sum_{j=1}^n \lambda_j^4(w)\right) = \EE \tr \left(\sum_{i=1}^m \omega_i A_i \right)^4 \\&\quad = \EE \left( \sum_{(i, j):\ i \ne j} \omega_i^2 \omega_j^2 \tr (A_i A_j A_i A_j) \right) + \EE \left( \sum_{(i, j):\ i \ne j} \omega_i^2 \omega_j^2 \tr (A_i^2 A_j^2)  \right) \\&\qquad \qquad + \EE \left( \sum_{(i, j):\ i \ne j} \omega_i^2 \omega_j^2 \tr (A_i A_j^2 A_i) \right) + 
\EE \left(\sum_{i=1}^m \omega_i^4 \tr (A_i^4)\right) \\
&\quad =\frac{T_1 + 2 T_2 + 3 T_3}{m(m+2)},
\end{split}
\end{equation*}
where
\begin{equation*}
\begin{split}
T_1=&\sum_{(i, j):\ i \ne j} \tr (A_i A_j A_i A_j), \\
T_2=&\sum_{(i, j):\ i \ne j} \tr (A_i^2 A_j^2) \quad \text{and} \\
T_3=&\sum_{i=1}^m \tr (A_i^4).
\end{split} 
\end{equation*}
We bound $T_1$, $T_2$ and $T_3$. 

Applying the Cauchy - Schwarz inequality, we get 
\begin{equation*}
\begin{split}
&|T_1|=\left| \sum_{(i, j):\ i \ne j} \tr (A_i A_j A_i A_j) \right| = \left| \sum_{(i, j):\ i \ne j} \langle A_j A_i, A_i A_j \rangle \right| \ \leq \ \sum_{(i, j):\ i \ne j} \|A_j A_i\|_{\mathrm{HS}} \| A_i A_j \|_{\mathrm{HS}} \\ &\qquad= \sum_{(i, j):\ i \ne j} \tr (A_i^2 A_j^2) =T_2.
\end{split}
\end{equation*} 
On the other hand,
\begin{equation*}
\begin{split}
T_2= \sum_{(i, j):\ i \ne j} \tr (A_i^2 A_j^2) =\tr \left( \sum_{i=1}^m A_i^2 \right)^2 -\sum_{i=1}^m \tr (A_i^4) = \left\| \sum_{i=1}^m A_i^2 \right\|^2_{\mathrm{HS}} - T_3.
\end{split}
\end{equation*}
Therefore,
$$| T_1 +2 T_2 + 3T_3| \leq |T_1| + 2 T_2 + 3T_3 \ \leq \ 3 T_2 + 3 T_3 = 3 \left\| \sum_{i=1}^m A_i^2 \right\|^2_{\mathrm{HS}}$$
The proof now follows by (\ref{eq4.1.3}).
\end{proof}

Next, we prove some uniform bounds.
\begin{lemma}\label{le4.4} For all $w \in {\mathbb S}^{m-1}$, we have
\begin{enumerate}
\item 
$$\| A(w)\|_{\mathrm{op}} \ \leq \ \left\| \sum_{i=1}^m A_i^2 \right\|^{1/2}_{\mathrm{op}} \quad \text{and} $$
\item $$\sum_{j=1}^n \lambda_j^4(w) \ \leq \ \left\| \sum_{i=1}^m A_i^2 \right\|_{\mathrm{op}}.$$
\end{enumerate}
\end{lemma}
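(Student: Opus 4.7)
My approach is to establish (1) directly and then deduce (2) from (1) by combining the orthonormality of the $A_i$ with inequality (\ref{eq4.1.2}).

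For part (1), fix an arbitrary unit vector $x \in \mathbb{R}^n$. The key observation is that $\|A(w)x\|^2$ is a quadratic form in $w$, and one can compute
\[
\|A(w)x\|^2 \;=\; \left\langle \sum_{i,j} \omega_i \omega_j A_i A_j \,x,\, x\right\rangle \;=\; \sum_{i,j}\omega_i \omega_j \langle A_i x, A_j x\rangle.
\]
Viewed as a quadratic form in $\omega \in \mathbb{S}^{m-1}$ with Gram matrix $G_{ij} = \langle A_i x, A_j x\rangle$, its value on the unit sphere is at most the largest eigenvalue of $G$, which is bounded by $\tr(G) = \sum_{i=1}^m \|A_i x\|^2 = \left\langle \sum_{i=1}^m A_i^2 \,x,\, x\right\rangle$. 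Thus
\[
\|A(w)x\|^2 \;\leq\; \left\langle \sum_{i=1}^m A_i^2\, x, x\right\rangle \;\leq\; \left\|\sum_{i=1}^m A_i^2\right\|_{\mathrm{op}},
\]
and taking the supremum over unit $x$ gives (1).

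For part (2), I would use the inequality (\ref{eq4.1.2}): $\|A(w)\|_{\mathrm{S_4}} \leq \|A(w)\|_{\mathrm{op}}^{1/2}\|A(w)\|_{\mathrm{HS}}^{1/2}$, so
\[
\sum_{j=1}^n \lambda_j^4(w) \;=\; \|A(w)\|_{\mathrm{S_4}}^4 \;\leq\; \|A(w)\|_{\mathrm{op}}^2 \,\|A(w)\|_{\mathrm{HS}}^2.
\]
Here the Hilbert--Schmidt norm is easy to evaluate exactly: since the matrices $A_i$ are orthonormal with respect to $\langle\cdot,\cdot\rangle$,
\[
\|A(w)\|_{\mathrm{HS}}^2 \;=\; \tr A(w)^2 \;=\; \sum_{i,j=1}^m \omega_i \omega_j \tr(A_i A_j) \;=\; \sum_{i=1}^m \omega_i^2 \;=\; 1.
\]
Combining with part (1), we get $\sum_j \lambda_j^4(w) \leq \|A(w)\|_{\mathrm{op}}^2 \leq \|\sum_i A_i^2\|_{\mathrm{op}}$, which proves (2).

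Neither step presents a real obstacle; the only point requiring a little care is the Gram-matrix bound in (1), but one could also obtain the same inequality directly from Cauchy--Schwarz applied to $\sum_i \omega_i (A_i x)$ viewed as an inner product of the vector $(\omega_1,\ldots,\omega_m)$ with $(\|A_1 x\|,\ldots,\|A_m x\|)$ only after passing to a fixed unit vector $y$ realizing $\|A(w)x\| = \langle A(w)x, y\rangle$, so the Gram-matrix approach is cleanest.
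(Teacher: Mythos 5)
Your proof is correct and essentially matches the paper's. The only cosmetic difference is in Part (1): the paper bounds $|\langle A(w)x,x\rangle|$ via two applications of Cauchy--Schwarz ($\sum_i \omega_i\langle A_ix,x\rangle\leq(\sum_i\langle A_ix,x\rangle^2)^{1/2}$ then $\langle A_ix,x\rangle^2\leq\langle A_i x,A_ix\rangle$), whereas you bound $\|A(w)x\|^2$ by the trace of the Gram matrix of the vectors $A_ix$ — both routes land on $\langle(\sum_i A_i^2)x,x\rangle$ and then take the operator-norm bound, and Part (2) is identical to the paper's (inequality (\ref{eq4.1.2}) together with $\|A(w)\|_{\mathrm{HS}}=1$).
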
 
\begin{proof} Repeatedly applying the Cauchy - Schwarz inequality, for any vector $x \in {\mathbb R}^n$ such that $\|x\|=1$, we obtain
\begin{equation*}
\begin{split}
| \langle A(w) x, x \rangle| = &\left| \sum_{i=1}^m \omega_i \langle A_i x, x \rangle \right|  \ \leq \ \left(\sum_{i=1}^m \langle A_i x, x \rangle^2 \right)^{1/2} \ \leq \
\left(\sum_{i=1}^m \langle A_i x, A_i x \rangle \right)^{1/2} \\
=& \left\langle \left(\sum_{i=1}^m A_i^2\right) x, x \right\rangle^{1/2}  \ \leq \ \left\| \sum_{i=1}^m A_i^2 \right\|^{1/2}_{\mathrm{op}},
\end{split}
\end{equation*}
and Part (1) follows. Note that here we did not use that $A_1, \ldots, A_m$ is an orthonormal set.

To prove Part (2), we bound
$$\sum_{j=1}^n \lambda_j^4(w) \ \leq \ \left( \max_{j=1, \ldots, n} \lambda_j^2(w)\right) \sum_{j=1}^n \lambda_j^2(w) =\left\| A(w)\right\|^2_{\mathrm{op}} \| A(w)\|^2_{\mathrm{HS}}.$$
Using that $A_1, \ldots, A_m$ is an orthonormal set, we obtain
\begin{equation}\label{eq4.4.1}
\|A(w)\|^2_{\mathrm{HS}} = \tr (A^2(w)) = \sum_{i, j=1}^m \omega_i \omega_j \tr (A_i A_j)= \sum_{i=1}^m \omega_i^2 =1.
\end{equation}
The proof now follows by Part (1).
\end{proof} 

Finally, we need a concentration inequality on the unit sphere ${\mathbb S}^{m-1}$ for the $4$-Schatten norm of $A(w)$.
\begin{lemma}\label{le4.5} For $\delta  \geq 0$, we have 
\begin{equation*}
\begin{split}
&\PP\left\{ w \in {\mathbb S}^{m-1}: \ \left\| A(w)\right\|_{\mathrm{S_4}} \ \geq \ \left( \frac{3}{m+2} \left\| \sum_{i=1}^m A_i^2 \right\|_{\mathrm{op}}\right)^{1/4} +\delta \right\} \\
&\quad  \leq \ \exp\left\{ -\frac{\delta^2 (m-1)}{2 \left\| \sum_{i=1}^m A_i^2 \right\|_{\mathrm{op}}^{1/2}}\right\}.
\end{split}
\end{equation*}
\end{lemma}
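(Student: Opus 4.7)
The plan is to apply L\'evy's concentration inequality on $\mathbb{S}^{m-1}$ to the function $f(w) := \|A(w)\|_{\mathrm{S_4}}$. The two ingredients needed are (a) a Lipschitz bound on $f$ and (b) an upper bound on $\EE f$; Lemma \ref{le4.3} will supply the latter, and the tools \eqref{eq4.1.1}, \eqref{eq4.1.2}, Lemma \ref{le4.4}, and \eqref{eq4.4.1} will supply the former.

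For the Lipschitz estimate, by the reverse triangle inequality \eqref{eq4.1.1} we have $|f(w)-f(w')| \le \|A(w-w')\|_{\mathrm{S_4}}$. Since $A(\cdot)$ is linear in its argument, writing $v = w-w'$ and $\hat v = v/\|v\|$ gives $\|A(v)\|_{\mathrm{S_4}} = \|v\|\cdot\|A(\hat v)\|_{\mathrm{S_4}}$. By \eqref{eq4.1.2}, $\|A(\hat v)\|_{\mathrm{S_4}} \le \|A(\hat v)\|_{\mathrm{op}}^{1/2}\|A(\hat v)\|_{\mathrm{HS}}^{1/2}$; orthonormality of $A_1,\dots,A_m$ gives $\|A(\hat v)\|_{\mathrm{HS}} = 1$ as in \eqref{eq4.4.1}, and Part (1) of Lemma \ref{le4.4} gives $\|A(\hat v)\|_{\mathrm{op}} \le \|\sum_i A_i^2\|_{\mathrm{op}}^{1/2}$. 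Thus $f$ is $L$-Lipschitz on $\mathbb{S}^{m-1}$ in the Euclidean (hence geodesic) metric with
$$L \;=\; \Bigl\|\, \textstyle\sum_{i=1}^m A_i^2\Bigr\|_{\mathrm{op}}^{1/4}.$$

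For the mean, Jensen's inequality applied to the concave function $x\mapsto x^{1/4}$ combined with Lemma \ref{le4.3} yields
$$\EE f(w) \;\le\; \bigl(\EE \|A(w)\|_{\mathrm{S_4}}^4\bigr)^{1/4} \;=\; \Bigl(\EE \textstyle\sum_{j=1}^n \lambda_j^4(w)\Bigr)^{1/4} \;\le\; \left(\frac{3}{m+2}\Bigl\|\textstyle\sum_{i=1}^m A_i^2\Bigr\|_{\mathrm{op}}\right)^{1/4}.$$

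Now I invoke the classical L\'evy concentration inequality on $\mathbb{S}^{m-1}$ endowed with the Haar probability measure: for every $L$-Lipschitz function $f$,
$$\PP\{f(w) \ge \EE f(w) + \delta\} \;\le\; \exp\!\bigl(-(m-1)\delta^2/(2L^2)\bigr).$$
Substituting $L^2 = \|\sum_{i=1}^m A_i^2\|_{\mathrm{op}}^{1/2}$ and combining with the upper bound on $\EE f$ gives the inequality in the statement. There is no serious obstacle here; the only point to attend to carefully is matching the constants in the exponent of L\'evy's inequality to the stated form $(m-1)/2$, which is the precise constant afforded by the spherical log-Sobolev inequality (equivalently, the Bakry--\'Emery/Gromov concentration for Ricci curvature $m-1$ on $\mathbb{S}^{m-1}$).
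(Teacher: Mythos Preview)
Your proposal is correct and follows essentially the same approach as the paper: L\'evy's concentration on $\mathbb{S}^{m-1}$ applied to $f(w)=\|A(w)\|_{\mathrm{S_4}}$, with the Lipschitz constant $\|\sum_i A_i^2\|_{\mathrm{op}}^{1/4}$ and the mean bounded via Lemma~\ref{le4.3} and Jensen/H\"older. The only cosmetic difference is that the paper obtains the Lipschitz bound by quoting Part~(2) of Lemma~\ref{le4.4} directly, whereas you unfold that step via \eqref{eq4.1.2}, \eqref{eq4.4.1}, and Part~(1) of Lemma~\ref{le4.4}---the content is identical.
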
 
\begin{proof} 
We apply a measure concentration inequality on the sphere ${\mathbb S}^{m-1}$. Let 
$$\dist(x, y)=\arccos \langle x, y \rangle$$
be the geodesic distance between two points $x, y \in {\mathbb S}^{m-1}$ and let $f: {\mathbb S}^{m-1} \longrightarrow {\mathbb R}$ be a 1-Lipschitz function, so that 
$$|f(x)-f(y)| \ \leq \ \dist(x, y) \quad \text{for all} \quad x, y \in {\mathbb S}^{m-1}.$$
Then for $c=\EE f$ and $\delta >0$ we have 
$$\PP\left\{w \in {\mathbb S}^{m-1}: \ f(w) \ \geq \ c+ \delta \right\} \ \leq \ \exp\left\{ -\frac{\delta^2 (m-1)}{2} \right\},$$
see, for example, Section 5.1 of \cite{Le01}.

Let us define a function $g: {\mathbb R}^m \longrightarrow {\mathbb R}$ by
$$g(x)=\|A(x)\|_{\mathrm{S_4}}, \quad \text{where} \quad A(x)=\sum_{i=1}^m \xi_i A_i \quad \text{for} \quad x=\left(\xi_1, \ldots, \xi_m\right).$$
Then from (\ref{eq4.1.1}) and Part 2 of Lemma \ref{le4.4}, for all $x, y \in {\mathbb S}^{m-1}$, we have 
\begin{equation*}
\begin{split}
|g(x) -g(y)| \ \leq \ &\|A(x)-A(y)\|_{\mathrm{S_4}}=\|A(x-y)\|_{\mathrm{S_4}} \ \leq \ \left\| \sum_{i=1}^m A_i^2 \right\|^{1/4}_{\mathrm{op}} \|x-y\| \\ \leq \ 
&\left\| \sum_{i=1}^m A_i^2 \right\|^{1/4}_{\mathrm{op}} \dist(x, y).
\end{split}
\end{equation*}
Therefore, for the expectation $c=\EE g$ on the unit sphere ${\mathbb S}^{m-1}$, we have
$$\PP\left\{ w \in {\mathbb S}^{m-1}: \ g(w) \ \geq c + \delta \right\} \ \leq \ \exp\left\{ - \frac{\delta^2 (m-1)}{2 \left\| \sum_{i=1}^m A_i^2 \right\|^{1/2}_{\mathrm{op}}} \right\} \quad \text{for} \quad \delta \geq 0.$$
By Lemma \ref{le4.3} and the H\"older inequality, we get 
$$c = \EE \left( \sum_{j=1}^n \lambda_j^4(w)\right)^{1/4}  \ \leq \ \left( \EE \sum_{j=1}^n \lambda_j^4(w) \right)^{1/4} \ \leq \ 
\left( \frac{3}{m+2} \left\| \sum_{i=1}^m A_i^2 \right\|_{\mathrm{op}} \right)^{1/4},$$
and the proof follows.
\end{proof} 

\section{Estimating integrals}\label{sec5}

Recall that we have an orthonormal set $A_1, \ldots, A_m$ of $n \times n$ symmetric real matrices. For $w \in {\mathbb S}^{m-1}$, 
$w=\left(\omega_1, \ldots, \omega_m\right)$, we define the matrix 
$$A(w)=\sum_{i=1}^m \omega_i A_i.$$ As follows by (\ref{eq4.4.1}), we have 
$$\| A(w)\|_{\mathrm{HS}} =1.$$

In this section, we consider the integral 
$$\int_0^{+\infty} \tau^{m-1} \det^{\quad -\frac{1}{2}} \left(I - \sqrt{-1} \tau A \right) \exp\left\{-\frac{\sqrt{-1} \tau}{2} \tr A\right\} \ d \tau,$$
where $A$ is an $n \times n$ symmetric matrix satisifying $\|A\|_{\mathrm{HS}}=1$ and possibly some other constraints. In particular, we will be interested in the situation when 
$$\|A\|_{\mathrm{op}}=O\left( \frac{1}{\sqrt{m}}\right).$$

We will be comparing this integral with 
$$\int_0^{+\infty} \tau^{m-1} \exp\left\{-{\frac{\tau^2}{4}} \right\} \ d \tau =2^{m-1} \Gamma \left(\frac{m}{2}\right) \ \sim \left(\frac{2}{e}\right)^{m/2} m^{m/2}.$$
First, we bound the tail.

\begin{lemma}\label{le5.1} Let $A$ be an $n \times n$ real symmetric matrix such that 
$$\|A\|_{\mathrm{HS}} =1 \quad \text{and} \quad \|A\|_{\mathrm{op}} \ \leq \ \frac{ 1}{10\sqrt{m}}.$$
Then for $m \geq 2$,
$$\int_{5 \sqrt{m}}^{+\infty} \tau^{m-1} \left| \det^{\quad-\frac{1}{2}} \left(I- \sqrt{-1} \tau A\right)\right| \ d \tau \ <  \ \frac{1}{20m} m^{m/2} e^{-3m}.$$
\end{lemma}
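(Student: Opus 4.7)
The plan is to bound the integrand pointwise in terms of the eigenvalues of $A$ and then apply a concavity argument to reduce to an elementary integral. Let $\lambda_1,\ldots,\lambda_n$ denote the eigenvalues of $A$. Since $\det(I-\sqrt{-1}\,\tau A)=\prod_{j=1}^n(1-\sqrt{-1}\,\tau\lambda_j)$, we have
\[
\bigl|\det^{-1/2}(I-\sqrt{-1}\,\tau A)\bigr|=\prod_{j=1}^n(1+\tau^2\lambda_j^2)^{-1/4},
\]
so the task becomes finding a sufficiently strong lower bound on $\sum_j\ln(1+\tau^2\lambda_j^2)$ using only $\sum_j\lambda_j^2=\|A\|_{\mathrm{HS}}^2=1$ and $\lambda_j^2\leq 1/(100m)$ (which follows from the hypothesis $\|A\|_{\mathrm{op}}\leq 1/(10\sqrt m)$).

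The central estimate is a concavity bound. Setting $g(x):=\ln(1+\tau^2 x)$ and $a:=1/(100m)$, the function $g$ is concave on $[0,\infty)$ with $g(0)=0$, so $g(x)/x$ is non-increasing on $(0,\infty)$. Hence on the interval $[0,a]$ one has $g(x)\geq (x/a)g(a)$, and applying this with $x=\lambda_j^2$ and summing gives
\[
\sum_{j=1}^n\ln(1+\tau^2\lambda_j^2)\;\geq\;\frac{g(a)}{a}\sum_j\lambda_j^2\;=\;100m\,\ln\!\left(1+\frac{\tau^2}{100m}\right),
\]
a bound that is tight precisely when $100m$ of the eigenvalues saturate $|\lambda_j|=1/(10\sqrt m)$. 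Exponentiating gives the pointwise bound $\bigl|\det^{-1/2}(I-\sqrt{-1}\,\tau A)\bigr|\leq(1+\tau^2/(100m))^{-25m}$.

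Substituting $u=\tau^2/(100m)$ (so $\tau=10\sqrt{mu}$, $d\tau=5\sqrt{m/u}\,du$, and the lower limit $\tau=5\sqrt m$ becomes $u=1/4$) converts the tail into
\[
5\cdot 10^{m-1}m^{m/2}\int_{1/4}^{+\infty}u^{(m-2)/2}(1+u)^{-25m}\,du.
\]
For $m\geq 2$, $(m-2)/2\geq 0$ and $u\leq 1+u$, so $u^{(m-2)/2}(1+u)^{-25m}\leq(1+u)^{-(49m+2)/2}$, and the remaining integral evaluates to $\tfrac{2}{49m}(4/5)^{49m/2}$. Assembling the constants, the tail is bounded by $\frac{m^{m/2}}{49m}\bigl(10\cdot(4/5)^{49/2}\bigr)^m$, and a direct computation shows $\ln\!\bigl(10\cdot(4/5)^{49/2}\bigr)=\ln 10-\tfrac{49}{2}\ln\tfrac{5}{4}<-3$, so this is strictly less than $\tfrac{m^{m/2}e^{-3m}}{49m}<\tfrac{m^{m/2}e^{-3m}}{20m}$, as required. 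The argument has no serious obstacle; the only point requiring care is that the numerical factor $1/(10\sqrt m)$ in the hypothesis is essentially tuned so that $10\cdot(4/5)^{49/2}<e^{-3}$, which must be verified by hand.
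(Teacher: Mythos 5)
Your proof is correct and follows essentially the same path as the paper's: reduce the integrand to $\prod_j(1+\tau^2\lambda_j^2)^{-1/4}$, use the constraints $\sum_j\lambda_j^2=1$ and $\lambda_j^2\le 1/(100m)$ to obtain the pointwise bound of the form $(1+\tau^2/(100m))^{-25m}$, then change variables and integrate an elementary power. The only difference is in the mechanics of the key step: the paper minimizes the log-concave function $\prod_j(1+\xi_j)$ over a polyhedron and invokes that the minimum sits at a vertex, whereas you use the cleaner pointwise inequality $g(x)\ge (x/a)\,g(a)$ for a concave $g$ with $g(0)=0$; this is more direct, avoids the ``all but possibly one coordinate'' bookkeeping, and in fact yields the exponent $-25m$ rather than the paper's $1/4-25m$, i.e.\ a marginally sharper constant (you get $1/(49m)$ in place of $1/(20m)$).
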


\begin{proof} Let $\lambda_1, \ldots, \lambda_n$ be the eigenvalues of $A$, so that 
$$\sum_{j=1}^n \lambda_j^2 =1 \quad \text{and} \quad |\lambda_j| \ \leq \ \frac{\alpha}{\sqrt{m}} \quad \text{for} \quad j=1, \ldots, n$$
(we will choose $\alpha=0.1$ at the end).
Then 
$$\left| \det^{\quad -\frac{1}{2}} \left( I-\sqrt{-1} \tau I\right)\right|=  \prod_{j=1}^n \left| 1- \sqrt{-1} \tau \lambda_j \right|^{-\frac{1}{2}} = \prod_{j=1}^n 
\left(1+ \lambda_j ^2 \tau^2\right)^{-\frac{1}{4}}.$$
Let 
$$\xi_j=\lambda_j^2 \tau^2 \quad \text{for} \quad j=1, \ldots, n.$$
Since the minimum of the log-concave function $\prod_{j=1}^n (1+\xi_j)$ on the convex polyhedron defined by the equation 
$$\sum_{j=1}^n \xi_j =\tau^2$$ 
and inequalities 
$$0 \ \leq \ \xi_j \ \leq \ \frac{\alpha^2 \tau^2}{m} \quad \text{and} \quad j=1, \ldots, n$$
is attained at its vertex where all but possibly one coordinate are either $0$ or $\alpha^2 \tau^2/m$, we have 
$$\prod_{j=1}^n \left(1 + \lambda_j^2 \tau^2\right)^{-\frac{1}{4}} \ \leq \ \left( 1+ \frac{\alpha^2 \tau^2}{m} \right)^\frac{\alpha^2 - m}{4 \alpha^2}.$$
Hence
\begin{equation*}
\begin{split}
&\int_\frac{\sqrt{m}}{2 \alpha}^{+\infty} \tau^{m-1} \left| \det^{\quad -\frac{1}{2}} \left(I - \sqrt{-1} \tau A \right)\right| \ d \tau \ \leq \ \int_\frac{\sqrt{m}}{2 \alpha}^{+\infty} \tau^{m-1} \left( 1+ \frac{\alpha^2 \tau^2}{m} \right)^\frac{\alpha^2 - m}{4 \alpha^2} \ d \tau \\ 
&\quad = \frac{m^{m/2}}{\alpha^m} \int_{{1/2}}^{+\infty} s^{m-1} (1+s^2)^\frac{\alpha^2 - m}{4 \alpha^2} \ d s \ \leq \ \frac{m^{m/2}}{\alpha^m} 
\int_{1/2}^{+\infty} (1+s^2)^\frac{m-1}{2} (1+s^2)^\frac{\alpha^2 - m}{4 \alpha^2} (2s) \ ds \\
&\quad =\frac{m^{m/2}}{\alpha^m} \int_{1/2}^{+\infty} (1+s^2)^{\frac{-m(1-2 \alpha^2)-\alpha^2}{4 \alpha^2}} (2s) \ ds \\ 
&\quad= \frac{m^{m/2}}{\alpha^m} \frac{4 \alpha^2}{(1-2 \alpha^2)m - 3 \alpha^2} \left(\frac{4}{5}\right)^\frac{(1-2\alpha^2)m -3 \alpha^2}{4 \alpha^2}
\end{split}
\end{equation*}
Substituting $\alpha=0.1$, we get
\begin{equation*}
\begin{split}
&\int_{5 \sqrt{m}}^{+\infty} \tau^{m-1} \left| \det^{\quad-\frac{1}{2}} \left(I - \sqrt{-1} \tau A \right) \right| \ d \tau \ \leq \ m^{m/2} 10^m \frac{0.04}{0.98m-0.03}
\left(\frac{4}{5}\right)^{(24.5) m -0.75} \\ &\quad < \ \frac{1}{20 m} 10^m m^{m/2} \left(\frac{4}{5}\right)^{(24.5)m} \ < \ \frac{1}{20 m} m^{m/2} e^{-3m}.
\end{split}
\end{equation*}
\end{proof}

Next, we estimate the integral on the initial interval.
\begin{lemma}\label{le5.2} Let $A$ be an $n \times n$ real symmetric matrix such that 
$$\|A\|_{\mathrm{HS}}=1 \quad \text{and} \quad \|A\|_{\mathrm{op}} \ \leq \ \frac{1}{10 \sqrt{m}}$$
and let $\lambda_1, \ldots, \lambda_n$ be the eigenvalues of $A$. Then, for $m \geq 1$,
\begin{enumerate}
\item We have 
\begin{equation*}
\begin{split}
&\int_0^{5 \sqrt{m}} \tau^{m-1} \left| \det^{\quad -\frac{1}{2}} \left(I - \sqrt{-1} \tau A \right) \right| \ d \tau \\ &\quad \leq \ \exp\left\{ \frac{625 m^2}{8} \sum_{j=1}^n \lambda_j^4 \right\} 
\int_0^{5 \sqrt{m}} \tau^{m-1} \exp\left\{ - {\frac{\tau^2}{4}}\right\}\ d \tau.
\end{split}
\end{equation*}
\item Suppose, in addition, that 
$$\left| \sum_{j=1}^n \lambda_j^3 \right| \ \leq \ \frac{1}{25 m^{3/2}} \quad \text{and} \quad \sum_{j=1}^n \lambda_j^4 \ \leq \ \frac{1}{625 m^2}.$$
Then 
\begin{equation*}
\begin{split}
&\Re \int_0^{5 \sqrt{m}} t^{m-1} \det^{\quad -\frac{1}{2}} (I - \sqrt{-1} \tau A) \exp\left\{ -\frac{\sqrt{-1} \tau}{2} \tr A \right\} \ d \tau \\ 
&\quad \geq \  \frac{1}{2}  \int_0^{5 \sqrt{m}} 
\tau^{m-1} \exp\left\{ -{\frac{\tau^2}{4}}\right\} \ d \tau. 
\end{split}
\end{equation*}
\end{enumerate}
\end{lemma}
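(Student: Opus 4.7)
The key computation underlying both parts is the identity
\[
\det^{\ -\frac{1}{2}}\!\bigl(I - \sqrt{-1}\,\tau A\bigr)\,\exp\!\Bigl\{-\tfrac{\sqrt{-1}\,\tau}{2}\tr A\Bigr\} \;=\; \exp\!\Bigl\{-\tfrac{\tau^2}{4} + E(\tau)\Bigr\},
\qquad
E(\tau) := \sum_{k\ge 3}\frac{(\sqrt{-1}\,\tau)^k}{2k}\sum_{j=1}^{n}\lambda_j^k,
\]
obtained by writing $-\tfrac{1}{2}\log\det(I-\sqrt{-1}\,\tau A) = -\tfrac{1}{2}\sum_j \log(1-\sqrt{-1}\,\tau\lambda_j)$, expanding each logarithm, and observing that the $k=1$ term cancels against the $\exp\{-\sqrt{-1}\tau\tr A/2\}$ factor while the $k=2$ term gives $-\tau^2/4$ because $\|A\|_{\mathrm{HS}}=1$. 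The series converges absolutely on $[0,5\sqrt m]$ since $|\tau\lambda_j|\le 1/2$ there.

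For Part (1), the absolute value of the integrand is $\tau^{m-1}\prod_j(1+\tau^2\lambda_j^2)^{-1/4}$, which I compare pointwise to $\tau^{m-1}e^{-\tau^2/4}$. Applying the elementary inequality $\log(1+x)\ge x - x^2/2$ (valid for $x\ge 0$) with $x=\tau^2\lambda_j^2$, then summing over $j$ and dividing by $4$, yields
\[
\prod_j(1+\tau^2\lambda_j^2)^{-1/4} \;\le\; \exp\!\Bigl\{-\tfrac{\tau^2}{4} + \tfrac{\tau^4}{8}\sum_j\lambda_j^4\Bigr\}.
\]
On $[0,5\sqrt m]$ one has $\tau^4\le 625\,m^2$, so the prefactor $\exp\{(625 m^2/8)\sum_j\lambda_j^4\}$ pulls out of the integral and Part (1) follows.

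For Part (2), the real part of the integrand equals $\tau^{m-1}e^{-\tau^2/4}\exp(\Re E(\tau))\cos(\Im E(\tau))$, so the goal reduces to the pointwise inequality $\exp(\Re E(\tau))\cos(\Im E(\tau))\ge 1/2$ on $[0,5\sqrt m]$. The even $k\ge 4$ terms of $E$ produce $\Re E$, and the odd $k\ge 3$ terms produce $\Im E$. The key bound is that for every $k\ge 4$, the assumptions $|\lambda_j|\le 1/(10\sqrt m)$ and $\sum_j\lambda_j^4\le 1/(625 m^2)$ give $\sum_j|\lambda_j|^k\le (10\sqrt m)^{4-k}/(625 m^2)$, and combined with $\tau\le 5\sqrt m$ this produces the clean uniform estimate $\tau^k\sum_j|\lambda_j|^k/(2k)\le 8/(k\cdot 2^k)$. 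Summing the geometric tails over even $k\ge 4$ yields $|\Re E(\tau)|\le 1/6$, and summing over odd $k\ge 5$ yields at most $1/15$ in $|\Im E|$. The $k=3$ term of $\Im E$ is handled directly by the hypothesis $|\sum_j\lambda_j^3|\le 1/(25 m^{3/2})$: it does not exceed $\tau^3/(6\cdot 25 m^{3/2})\le 5/6$. Hence $|\Im E(\tau)|\le 5/6 + 1/15 = 9/10$, and the numerical check $e^{-1/6}\cos(9/10) > 1/2$ closes the estimate.

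The delicate point is the tightness of the numerical constants: the hypotheses of Part (2) are calibrated so that the extremal $k=3$ contribution $5/6$ to $|\Im E|$ and the $k=4$ contribution $1/8$ to $|\Re E|$ leave exactly enough slack for $\exp(-|\Re E|)\cos(|\Im E|)\ge 1/2$ to survive the absorption of the higher-order tails. It is crucial that the hypothesis on $\sum_j\lambda_j^3$ exploits the cancellation of signed eigenvalues, since the crude Schatten-type bound $\sum_j|\lambda_j|^3\le\max_j|\lambda_j|\cdot\|A\|_{\mathrm{HS}}^2 = 1/(10\sqrt m)$ would give a contribution to $|\Im E(\tau)|$ of order $\sqrt m$ for $\tau\sim\sqrt m$, which is useless.
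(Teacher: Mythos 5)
Your proof is correct and rests on the same key identity as the paper's: expand $-\tfrac12\log\det(I-\sqrt{-1}\,\tau A)$, cancel the $k=1$ term against the trace factor, and read off $-\tau^2/4$ from the $k=2$ term. The difference is in how the remainder $E(\tau)$ is controlled. The paper exploits the monotone alternating structure of the even-order terms (valid because $\tau^2\lambda_j^2\le 1/4$ on $[0,5\sqrt m]$) to get the \emph{one-sided} bound $\Re E(\tau)\ge 0$, hence $e^{\Re E}\ge 1$, and only estimates $|\Im E|$, obtaining $|\Im E|\le 9/10<\pi/3$ and concluding with $\cos(9/10)>1/2$. You instead apply your clean uniform tail estimate $\tau^k\sum_j|\lambda_j|^k/(2k)\le 8/(k\cdot 2^k)$ to all $k\ge 4$, both even and odd, which gives the \emph{two-sided} bound $|\Re E|\le 1/6$; this costs you the factor $e^{-1/6}\approx 0.847$ and leaves a much tighter final margin, $e^{-1/6}\cos(9/10)\approx 0.526$ versus the paper's $\approx 0.622$, though still above $1/2$. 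The trade-off: your geometric estimate treats the real and imaginary tails uniformly and is somewhat cleaner to state, while the paper's one-sided bound on $\Re E$ exploits sign information you discard and leaves visibly more room. Your Part (1), via $\log(1+x)\ge x-x^2/2$, is a self-contained substitute for the same alternating-series monotonicity and yields the identical inequality. Both routes are valid.
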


\begin{proof} Since in the interval $0 \leq \tau \leq 5 \sqrt{m}$, we have 
$$|\tau \lambda_j| \ \leq \ \frac{1}{2} \quad \text{for} \quad j=1, \ldots, n,$$
we can expand 
\begin{align} \label{eq: det}
&\det^{\quad -\frac{1}{2}} \left(I - \sqrt{-1} \tau A\right) \exp\left\{ -\frac{ \sqrt{-1} \tau}{2} \tr A \right\} \\
&\quad = \exp\left\{ -\frac{1}{2} \sum_{j=1}^n \ln (1- \sqrt{-1} \tau \lambda_j) -
\frac{\sqrt{-1} \tau}{2} \sum_{j=1}^n \lambda_j \right\} \notag \\
&\quad = \exp\left\{ \frac{1}{2} \sum_{k=2}^{\infty} \frac{(\tau \sqrt{-1})^k}{k} \sum_{j=1}^n \lambda_j^k \right\} \notag \\ 
&\quad = \exp\left\{h(\tau) + \sqrt{-1} g(\tau) \right\}, \notag
\end{align} 
where 
$$h(\tau)=\sum_{s=1}^{\infty} (-1)^s \frac{\tau^{2s}}{4s} \sum_{j=1}^n \lambda_j^{2s} \quad \text{and} \quad g(\tau) =\sum_{s=1}^{\infty} (-1)^s \frac{\tau^{2s+1}}{4s+2} \sum_{j=1}^n \lambda_j^{2s+1}.$$
We have 
$$\sum_{j=1}^n \lambda_j^2 =\|A\|^2_{\mathrm{HS}} = 1$$
and for $s \geq 	1$, we have 
$$\sum_{j=1}^n \lambda_j^{2(s+1)} \ \leq \ \left(\max_{j=1, \ldots, n} \lambda_j^2 \right) \sum_{j=1}^n \lambda_j^{2s} \ \leq \ \frac{1}{100 m} \sum_{j=1}^n \lambda_j^{2s}.$$
Consequently, for $0 \leq \tau \leq 5 \sqrt{m}$, we have
$$\sum_{j=1}^n (\tau \lambda_j)^{2(s+1)} \ \leq \ \frac{\tau^2}{100 m} \sum_{j=1}^n (\tau \lambda_j)^{2s} \ \leq \ \frac{1}{4} \sum_{j=1}^n (\tau \lambda_j)^{2s}.$$
Hence the terms of $h(\tau)$ alternate in sign and decrease in the absolute value, from which we deduce that 
\begin{equation}\label{eq5.2.1}
-{\frac{\tau^2}{4}} \ \leq \ h(\tau) \ \leq \ -{\frac{\tau^2}{4}} + \frac{\tau^4}{8}  \sum_{j=1}^n \lambda_j^4  \quad \text{for} \quad 0 \ \leq \ \tau \ \leq \ 5 \sqrt{m}.
\end{equation}
Part (1)  now follows from the upper bound in (\ref{eq5.2.1}).

To prove Part (2), we bound $g(\tau)$ assuming that 
$$\left| \sum_{j=1}^n \lambda_j^3 \right| \ \leq \ \frac{\alpha}{m^{3/2}} \quad \text{and} \quad \sum_{j=1}^n \lambda_j^4 \ \leq \ \frac{\beta}{m^2}$$
(we substitute $\alpha=1/25$ and $\beta=1/625$  at the end). 
For $s \geq 2$, we have
$$\sum_{j=1}^n |\lambda_j|^{2s+1}\ \leq \ \left(\max_{j=1, \ldots, n} |\lambda_j| \right)^{2s-3} \cdot  \sum_{j=1}^n \lambda_j^4 \  \leq \ \frac{\beta}{10^{2s-3} m^{s+\frac{1}{2}}} .$$
Therefore, in the interval $0 \leq \tau \leq 5 \sqrt{m}$, we have
$$\left| \sum_{j=1}^n (\tau \lambda_j)^3 \right| \ \leq \ 125 \alpha \quad \text{and} \quad \left| \sum_{j=1}^n (\tau \lambda_j)^{2s+1} \right| \ \leq \ \frac{625 \beta}{2^{2s-3}} \quad 
\text{for} \quad s \geq 2.$$
Therefore, in the interval $0 \leq \tau \leq 5 \sqrt{m}$, we have 
$$|g(\tau)| \ \leq \ \frac{125 \alpha}{6} + \sum_{s=2}^{\infty} \frac{625 \beta}{(4s+2) 2^{2s-3}}  \ \leq  \ \frac{125 \alpha}{6} + \frac{1250 \beta}{30}.$$
Substituting 
$$\alpha= \frac{1}{25} \quad \text{and} \quad \beta=\frac{1}{625},$$
we conclude that 
$$|g(\tau)| \ \leq \ \frac{5}{6} +\frac{1}{15} =\frac{27}{30} \ < \ \frac{\pi}{3} \quad \text{for all} \quad 0 \leq \tau \leq 5 \sqrt{m}.$$
The proof now follows from the lower bound in (\ref{eq5.2.1}).
\end{proof} 

The last lemma of this section contains some estimates for our benchmark integral.
\begin{lemma}\label{le5.3}  For $m \geq 2$, we have 
\begin{enumerate}
\item
$$\int_0^{+\infty} \tau^{m-1} \exp\left\{ -{\frac{\tau^2}{4}}\right\} \ d \tau \ \geq \ m^{m/2} \sqrt{\frac{\pi}{m}} \left( \frac{2}{e}\right)^{m/2} \quad \text{and}$$
\item
\begin{equation*}
\begin{split}
&\int_{5 \sqrt{m}}^{+\infty} \tau^{m-1} \exp\left\{ -{\frac{\tau^2}{4}}\right\} \ d \tau \\
&\quad \leq \ \sqrt{\frac{2 \pi}{m-1}} 2^m m^{m/2} \exp\left\{ - \frac{25(m-1)}{8}\right\}. 
\end{split}
\end{equation*}
\end{enumerate} 
\end{lemma}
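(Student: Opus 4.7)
The plan is to reduce both integrals to values of the gamma function via substitutions of the form $u = \tau^2/(2c)$, and then apply one-sided Stirling bounds. Both sides of each inequality match the Stirling approximation tightly, so only elementary comparisons will be needed.

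For Part (1), I would substitute $u = \tau^2/4$, which rewrites the integral exactly as $2^{m-1}\Gamma(m/2)$. The classical lower Stirling estimate $\Gamma(x) \geq \sqrt{2\pi/x}\,(x/e)^x$ (coming from $\Gamma(x+1) \geq \sqrt{2\pi x}\,(x/e)^x$) evaluated at $x = m/2$ yields
\[
 2^{m-1} \Gamma(m/2) \geq 2^{m-1} \sqrt{\frac{4\pi}{m}} \left(\frac{m}{2e}\right)^{m/2} = m^{m/2} \sqrt{\frac{\pi}{m}} \left(\frac{2}{e}\right)^{m/2},
\]
which is exactly the claimed bound, with no slack to absorb.

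For Part (2), the key is to isolate the factor $e^{-25(m-1)/8}$ from the tail. I would split $e^{-\tau^2/4} = e^{-\tau^2/8}\cdot e^{-\tau^2/8}$. On $[5\sqrt m,\infty)$ the second factor is bounded above by $e^{-25m/8} \leq e^{-25(m-1)/8}$, so
\[
 \int_{5\sqrt m}^{\infty} \tau^{m-1} e^{-\tau^2/4}\,d\tau \ \leq \ e^{-25(m-1)/8} \int_0^\infty \tau^{m-1} e^{-\tau^2/8}\,d\tau.
\]
The remaining Gaussian integral equals $2^{3m/2-1}\Gamma(m/2)$ by the substitution $u = \tau^2/8$. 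Applying the upper Stirling bound $\Gamma(x) \leq \sqrt{2\pi/x}\,(x/e)^x e^{1/(12x)}$ at $x = m/2$ and simplifying reduces the target inequality to
\[
 \sqrt{\frac{m-1}{2m}}\, e^{-m/2 + 1/(6m)} \ \leq \ 1,
\]
which holds trivially for $m \geq 2$, since $\sqrt{(m-1)/(2m)} \leq 1/\sqrt{2}$ and $e^{-m/2 + 1/(6m)} \leq e^{-1 + 1/12} < 1$.

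There is no real obstacle here; the only bookkeeping concern is tracking the powers of $2$ through the two substitutions. The choice of the split $\tau^2/4 = \tau^2/8 + \tau^2/8$ in Part (2) is what makes the comparison clean: it converts a truncated integral into an unconditional Gaussian while extracting precisely the exponential factor present in the claimed bound.
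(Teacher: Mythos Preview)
Your argument is correct. Part (1) is identical to the paper's: both evaluate the integral as $2^{m-1}\Gamma(m/2)$ and invoke the lower Stirling bound $\Gamma(x)\geq\sqrt{2\pi}\,x^{x-1/2}e^{-x}$.

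For Part (2) you take a genuinely different route. The paper first substitutes $\tau=2\sqrt{m}\,s$, then uses the pointwise bound $s^{m-1}e^{-ms^2}\leq e^{-(m-1)s^2/2}$ (valid since $\ln s\leq s^2/2$), and finishes with the Gaussian tail estimate $\int_a^\infty e^{-\tau^2/2}\,d\tau\leq\sqrt{2\pi}\,e^{-a^2/2}$. Your approach instead peels off the factor $e^{-\tau^2/8}\leq e^{-25m/8}$ at the endpoint, extends the remaining integral $\int\tau^{m-1}e^{-\tau^2/8}\,d\tau$ to all of $[0,\infty)$, evaluates it exactly as $2^{3m/2-1}\Gamma(m/2)$, and closes with the \emph{upper} Stirling bound. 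Both arguments are short and elementary; yours trades the Gaussian tail inequality for Stirling's upper estimate, and the splitting $\tau^2/4=\tau^2/8+\tau^2/8$ is a clean device that makes the target exponent $25(m-1)/8$ appear immediately. The paper's route has the minor aesthetic advantage of never extending the domain of integration, but there is nothing to choose between them in substance.
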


\begin{proof} We have
\begin{equation*}
\begin{split}
\int_0^{+\infty} \tau^{m-1} \exp\left\{ -{\frac{\tau^2}{4}}\right\} \ d \tau=2^{m-1} \int_0^{+\infty} s^{\frac{m-2 }{2}} \exp\{ -s \} \ ds = 2^{m-1} \Gamma\left(\frac{m}{2}\right). 
\end{split}
\end{equation*}
To prove Part (1), we use the standard inequality 
$$\Gamma(x) \ \geq \ \sqrt{2 \pi} x^{x-\frac{1}{2}} e^{-x} \quad \text{for} \quad x \geq 1.$$

To prove Part (2), we bound
\begin{equation*}
\begin{split}
&\int_{5 \sqrt{m}}^{+\infty} \tau^{m-1} \exp\left\{ -{\frac{\tau^2}{4}}\right\} \ d \tau =2^m m^{m/2} \int_{5/2}^{+\infty} s^{m-1} \exp\left\{ -ms^2 \right\} \ ds  \\
&\quad \leq \ 2^m m^{m/2} \int_{5/2}^{+\infty} \exp\left\{ -(m-1) \left(s^2 - \ln s \right) \right\} \ ds  \\ &\quad \leq \ 2^m m^{m/2} \int_{5/2}^{+\infty} \exp\left\{ -\frac{(m-1) s^2 }{2} \right\} 
\ ds = \frac{2^m m^{m/2}}{\sqrt{m-1}} \int_{\frac{5\sqrt{m-1}}{2}}^{+\infty} \exp\left\{ - \frac{\tau^2}{2} \right\} \ d \tau \\ &\quad \leq \ 
\sqrt{\frac{2 \pi}{ m-1}} 2^m m^{m/2} \exp\left\{ -\frac{25(m-1)}{8}\right\},
\end{split}
\end{equation*}
where in the last inequality we use the standard Gaussian probability tail estimate
$$\frac{1}{\sqrt{2 \pi}} \int_a^{+\infty} e^{-\tau^2/2} \ d \tau \ \leq \ e^{-a^2/2} \quad \text{for} \quad a \geq 0.$$
\end{proof}

\section{Proofs of Theorems \ref{th1.3} and \ref{th1.5}}\label{sec6}

\begin{proof}[Proof of Theorem \ref{th1.3}] 
We choose 
$$\eta=10^{-6}.$$

Let $A_1, \ldots, A_m$ be an orthonormal basis of the subspace $\spa\left(Q_1, \ldots, Q_m\right)$ in the space of $n \times n$ 
symmetric matrices and let
$$a_i(x)=\langle A_i x, x \rangle \quad \text{for} \quad i=1, \ldots, m$$
be the corresponding quadratic forms. Since the quadratic forms $q_1, \ldots, q_m$ are linear combinations of the forms $a_1, \ldots, a_m$ and vice versa, the system 
$$q_i(x)=\tr Q_i \quad \text{for} \quad i=1, \ldots, m$$ 
has a solution if and only if the system 
$$a_i(x)=\tr A_i \quad \text{for} \quad i=1, \ldots, m$$
has a solution $x$. To establish the existence of a solution of the latter system, we use Theorem \ref{th2.1}, for which we consider the integral 
\begin{equation}\label{eq6.1}
\int_{{\mathbb R}^m} \det^{\quad -\frac{1}{2}} \left(I - \sqrt{-1} \sum_{i=1}^m \tau_i A_i \right) \exp\left\{ -\frac{\sqrt{-1}}{2} \sum_{i=1}^m \tau_i \tr A_i \right\} \ dt.
\end{equation}

Our goal is to prove that the integral (\ref{eq6.1}) converges absolutely to a non-zero value. 

Let ${\mathbb S}^{m-1} \subset {\mathbb R}^m$ be the unit sphere endowed with the Haar probability measure. For $w \in {\mathbb S}^{m-1}$, $w=\left(\omega_1, \ldots, \omega_m\right)$, 
let 
$$A(w)=\sum_{i=1}^m \omega_i A_i.$$
Then by (\ref{eq4.4.1}) and Lemma \ref{le4.4} for every $w \in {\mathbb S}^{m-1}$, we have 
$$\| A(w)\|_{\mathrm {HS}}=1 \quad \text{and} \quad \|A\|_{\mathrm{op}}=\sqrt{ \frac{\eta}{m}} \ < \ \frac{1}{10\sqrt{m}}.$$
It follows from Lemma (\ref{le5.1}) that 
$$\int_0^{+\infty} \tau^{m-1} \left| \det^{\quad -\frac{1}{2}} \left(I - \sqrt{-1} \tau A(w)\right)\right| \ d \tau \ < \ + \infty.$$
Hence the integral (\ref{eq6.1}) indeed converges absolutely and, up to a non-zero factor (the surface area of the sphere ${\mathbb S^{m-1}}$) can be written as an absolutely converging integral
\begin{equation}\label{eq6.2}
\EE \left( \int_{0}^{+\infty}  \det^{\quad -\frac{1}{2}} \left(I - \sqrt{-1} \sum_{i=1}^m \tau A(w) \right) \exp\left\{ -\frac{\sqrt{-1} \tau}{2} \tr A(w)\right\} \ d\tau\right)
\end{equation}
where the expectation is taken with respect to the Haar measure on ${\mathbb S}^{m-1}$.
Hence our goal is to prove that the integral (\ref{eq6.2}) is non-zero. We intend to prove that the real part of the integral is positive. 

Let $\lambda_1(w), \ldots, \lambda_n(w)$ be the eigenvalues of $A(w)$. By Lemma \ref{le4.2},
$$\EE \left(\sum_{j=1}^n \lambda_j^3(w) \right)^2 \ \leq \ \frac{120 \eta}{m (m+2) (m+4)}  \ < \ \frac{3}{25000 m^3}.$$
Therefore, by the Markov inequality, 
\begin{equation}\label{eq6.3}
\PP\left\{w: \ \left| \sum_{j=1}^n \lambda_j^3(w) \right| \ > \ \frac{1}{25 m^{3/2}}\right\} \ \leq \  \frac{3}{40}.
\end{equation}
By Lemma \ref{le4.3}, 
$$\EE \left(\sum_{j=1}^n \lambda_j^4(w) \right) \ \leq \ {\frac{3\eta}{m(m+2)}}   \ < \ \frac{3}{10^6 m^2}.$$
Hence, using  the Markov inequality again, we get
\begin{equation}\label{eq6.4}
\PP\left\{w: \sum_{j=1}^n \lambda_j^4(w) \ > \ \frac{1}{625 m^2} \right\} \ \leq \ \frac{3}{1600}.
\end{equation}
We represent ${\mathbb S}^{m-1}$ as a disjoint union 
$${\mathbb S}^{m-1} = \Omega_0 \cup \Omega_1 \cup \Omega_2,$$
where
$$\Omega_0=\left\{ w \in {\mathbb S}^{m-1}:\quad  \left| \sum_{j=1}^n \lambda_j^3(w) \right| \ \leq \ \frac{1}{25 m^{3/2}} \quad \text{and} \quad \sum_{j=1}^n \lambda_j^4(w) \ \leq \ \frac{1}{625 m^2} \right\},$$
$$\Omega_1 =\left\{w \in {\mathbb S}^{m-1}:\quad \left| \sum_{j=1}^n \lambda_j^3(w) \right| \ > \ \frac{1}{25 m^{3/2}} \quad \text{and} \quad \sum_{j=1}^n \lambda_j^4(w) \ \leq \ 
\frac{1}{625 m ^2} \right\} \quad \text{and} $$
$$\Omega_2=\left\{ w \in {\mathbb S}^{m-1}:\quad  \sum_{j=1}^n \lambda_j^4(w)  \ > \ \frac{1}{625 m^2} \right\}.$$
From (\ref{eq6.3}) and (\ref{eq6.4}), we have 
$$\PP( \Omega_0) \ \geq \ \frac{7}{8}$$
and hence from Part (2) of Lemma \ref{le5.2},
\begin{equation}\label{eq6.5}
\begin{split}
&\Re \int_{\Omega_0} \left( \int_{0}^{5 \sqrt{m}}  \det^{\quad -\frac{1}{2}} \left(I - \sqrt{-1} \sum_{i=1}^m \tau A(w) \right) \exp\left\{ -\frac{\sqrt{-1}\tau}{2} \tr A(w)\right\} \ d\tau\right) \ dw \\
&\quad \geq \ \frac{7}{16} \int_0^{5 \sqrt{m}} \tau^{m-1} \exp\left\{ - {\frac{\tau^2}{4}} \right\} \ d \tau,
\end{split}
\end{equation}
where $dw$ is the Haar measure in ${\mathbb S}^{m-1}$. 

From (\ref{eq6.3}), we have 
$$P(\Omega_1) \ \leq \ \frac{3}{40} $$
and hence Part (1) of Lemma \ref{le5.2} yields
\begin{equation}\label{eq6.6}
\begin{split}
&\int_{\Omega_1} \left(\int_0^{5 \sqrt{m}} \tau^{m-1} \left| \det^{\quad -\frac{1}{2}} \left(I - \sqrt{-1} \tau A(w)\right)\right| \ d \tau \right) \ d w \\
&\quad \leq \frac{3}{40} \exp\left\{\frac{1}{8}\right\} \int_0^{5 \sqrt{m}} \tau^{m-1} \exp \left\{- {\frac{\tau^2}{4}}\right\} \ d \tau \\
&\quad <  0.1 \int_0^{5 \sqrt{m}} \tau^{m-1} \exp \left\{- {\frac{\tau^2}{4}}\right\} \ d \tau. 
\end{split} 
\end{equation}

For integer $k \geq 1$, let 
$$\Omega_2^k=\left\{w \in {\mathbb S}^{m-1}: \quad \frac{k}{5 \sqrt{m}} \ < \ \|A(w)\|_{\mathrm{S_4}} \ \leq \ \frac{k+1}{5 \sqrt{m}}\right\}.$$
Then from Part (2) of Lemma \ref{le4.4}, we have 
$$\Omega_2= \bigcup_{k=1}^{5(\eta m)^{1/4}}  \Omega_2^k.$$
By Lemma \ref{le4.5}, taking into account that $\eta=10^{-6}$, we get 
\begin{equation*}
\begin{split}
&\PP\left(\Omega_2^k\right) \ \leq \ \PP\left\{w\in {\mathbb S}^{m-1}: \ \|A(w)\|_{\mathrm{S_4}}\  \geq \  \left(\frac{3\eta}{m(m+2)}\right)^{1/4}+ \frac{k}{6 \sqrt{m}} \right\} \\
&\quad \leq \  \exp\left\{ -\frac{k^2(m-1)}{72 \sqrt{\eta m} } \right\}.
\end{split}
\end{equation*}
In view of Part (1) of Lemma \ref{le5.2},
\begin{equation*}
\begin{split}
&\int_{\Omega_2^k} \left( \int_0^{5 \sqrt{m}} \tau^{m-1} \left| \det^{\quad -\frac{1}{2}} \left(I - \sqrt{-1} \tau A\right) \right| \ d \tau \right)\ d w \\
&\quad \leq  \exp\left\{ \frac{(k+1)^4}{8}  -\frac{k^2(m-1)}{72 \sqrt{\eta m} } \right\} \int_0^{5 \sqrt{m}} \tau^{m-1} \exp\left\{ - {\frac{\tau^2}{4}}\right\} \ d \tau.
\end{split}
\end{equation*} 
Since 
$$k \ \leq \ 5(\eta m)^{1/4},$$
we have 
\begin{equation*}
\frac{(k+1)^4}{8}  \ \leq \ 2 k^4 \ \leq 50 k^2 (\eta m)^{1/2}
\end{equation*} 
and 
\begin{equation*}
\frac{(k+1)^4}{8}  -\frac{k^2(m-1)}{72 \sqrt{\eta m} } \ \leq \ \frac{k^2 \sqrt{m}}{20} - 6 k^2 \sqrt{m}  \ < \ -5 k^2.
\end{equation*} 
Hence
\begin{equation}\label{eq6.7}
\begin{split}
&\int_{\Omega_2} \left( \int_0^{5 \sqrt{m}} \tau^{m-1} \left| \det^{\quad -\frac{1}{2}} \left(I - \sqrt{-1} \tau A\right) \right| \ d \tau \right)\ d w \\
&\quad < \ \left(\sum_{k=1}^{\infty} \exp\left\{ -5 k^2 \right\} \right) \int_0^{5 \sqrt{m}} \tau^{m-1} \exp\left\{ - {\frac{\tau^2}{4}}\right\} \ d \tau \\
&\quad < \ 0.01 \int_0^{5 \sqrt{m}} \tau^{m-1} \exp\left\{ - {\frac{\tau^2}{4}}\right\} \ d \tau.
\end{split}
\end{equation}
Summarizing, from (\ref{eq6.5}), (\ref{eq6.6}) and (\ref{eq6.7}), we get 
\begin{equation*}
\begin{split}
&\left| \EE  \int_0^{5 \sqrt{m}} \det^{\quad -\frac{1}{2}} \left(I - \sqrt{-1} \sum_{i=1}^m \tau A(w) \right) \exp\left\{ -\frac{\sqrt{-1}\tau}{2} \tr A(w)\right\} \ d\tau \right| \\ &\quad  > \ 
\frac{1}{4} \int_0^{5 \sqrt{m}} \tau^{m-1} \exp\left\{-{\frac{\tau^2}{4}}\right\} \ d \tau
\end{split}
\end{equation*}
and hence by Lemma \ref{le5.1}, the absolute value of the expectation (\ref{eq6.2}) is at least 
\begin{equation*}
\frac{1}{4} \int_0^{5 \sqrt{m}} \tau^{m-1} \exp\left\{ -{\frac{\tau^2}{4}}\right\} - \frac{1}{20 m} m^{m/2} e^{-3m}.
\end{equation*} 
Then by Lemma \ref{le5.3}, the absolute value of the expectation (\ref{eq6.2}) is at least 
\begin{equation*}
m^{m/2} \left( \sqrt{\frac{\pi }{16m}} \left(\frac{2}{e}\right)^{m/2} - \sqrt{\frac{\pi}{m-1}} 2^m \exp\left\{ -\frac{25(m-1)}{8} \right\} -\frac{1}{20 m} e^{-3m}   \right),
\end{equation*} 
which is positive for $m \geq 3$.
\end{proof}

\begin{proof}[Proof of Theorem \ref{th1.5}] The proof is identical, except we use Theorem \ref{th2.4} instead of Theorem \ref{th2.1}.
\end{proof}

\end{document}